\colorlet{darkred}{red!40!black}
\begin{document}
\title[nutrient induced tumor boundary instability]%
  {Tumor boundary instability induced by nutrient consumption and supply}
\author[Feng]{Yu Feng}
\address{%
  Yu Feng, Beijing International Center for Mathematical Research, Peking University, No. 5 Yiheyuan Road Haidian District, Beijing, P.R.China 100871}
\email{fengyu@bicmr.pku.edu.cn}
\author[Tang]{Min Tang}
\address{%
  Min Tang:  School of Mathematical Sciences, Institute of Natural Sciences, MOE-LSC, Shanghai Jiao Tong University, Shanghai, 200240, P. R. China}
\email{tangmin@sjtu.edu.cn}
\author[Xu]{Xiaoqian Xu}
\address{Xiaoqian Xu: Zu Chongzhi Center for Mathematics and Computational Sciences, Duke Kunshan University, China}
\email{xiaoqian.xu@dukekunshan.edu.cn}
\author[Zhou]{Zhennan Zhou}
\address{%
  Zhennan Zhou, Beijing International Center for Mathematical Research, Peking University, No. 5 Yiheyuan Road Haidian District, Beijing, P.R.China 100871}
\email{zhennan@bicmr.pku.edu.cn}
\date{\today}

\begin{abstract}
We investigate the tumor boundary instability induced by nutrient consumption and supply based on a Hele-Shaw model derived from taking the incompressible limit of a cell density model. We analyze the boundary stability/instability in two scenarios: 1) the front of the traveling wave; 2) the radially symmetric boundary. In each scenario, we investigate the boundary behaviors under two different nutrient supply regimes, \emph{in vitro} and \emph{in vivo}. Our main conclusion is that for either scenario, the \emph{in vitro} regime always stabilizes the tumor's boundary regardless of the nutrient consumption rate. However, boundary instability may occur when the tumor cells aggressively consume nutrients, and the nutrient supply is governed by the \emph{in vivo} regime. 
\end{abstract}
\maketitle
\section{Introduction}
Tumor, one of the major diseases threatening human life and health, has been widely concerned. The mathematical study of tumors has a long history and constantly active. We refer the reader to the textbook \cites{cristini2010multiscale,cristini2017introduction} and review articles \cites{araujo2004history,lowengrub2009nonlinear,byrne2006modelling,roose2007mathematical}. Previous studies and experiments indicate that the shape of tumors is one of the critical criteria to distinguish malignant from benign. Specifically, malignant tumors are more likely to form dendritic structures than benign ones. Therefore, it is significant to detect and predict the formation of tumor boundary instability through mathematical models. Before discussing the mathematical studies of tumor morphology, we review relevant mathematical models as follows.

The first class of model was initiated by Greenspan in 1976 \cite{greenspan1976growth}, which further inspired a mass of mathematical studies on tumor growth (e.g., \cites{byrne1996growth,chaplain1996avascular,zheng2005nonlinear,friedman2007bifurcationA}). The tumor is regarded as an incompressible fluid satisfying mass conversation. More precisely, these free boundary type models have two main ingredients. One is the nutrient concentration $\sigma$ governed by a reaction-diffusion equation, which considers the consumption by the cells and the supplement by vessels. The other main component is the internal pressure $p$, which further induces the cell velocity $v$ via different physical laws (e.g., Darcy's law     \cites{greenspan1976growth,byrne1996growth,friedman1999analysis,cristini2003nonlinear}, Stokes law \cites{friedman2007bifurcationA,friedman2002quasistatic,friedman2002quasi}, and Darcy\&Stokes law \cites{franks2003interactions,franks2009interactions,zheng2005nonlinear,king2006mathematical,pham2011predictions}). Finally, the two ingredients are coupled via the mass conservation of incompressible tumor cells, which yields the relation $\grad\cdot v=\lambda(\sigma)$, with the cell proliferation rate $\lambda$ depending on $\sigma$. To close the model, the Laplace-Young condition ($p\vert_{\partial\Omega}=\gamma\kappa$, where $\kappa$ is the mean curvature, and $\gamma$ stands for the surface tension coefficient) is imposed on the tumor-host interface. For some  variant models, people replace the Laplace-Young condition with other curvature-dependent boundary conditions (see, e.g., \cites{turian2019morphological,lu2019nonlinear,pham2018nonlinear}). More sophisticated models were also investigated recently. In particular, we mention the studies based on the two-phase models \cites{pham2018nonlinear,turian2019morphological,lu2019nonlinear}, and the works involve chemotaxis \cites{he2022incompressible,kim2022density}. 

Most studies on the stability/instability of tumor boundary are based on the above class of models and have been investigated from different points of view. Among them, for different models (e.g., Darcy \cites{fontelos2003symmetry,friedman2001symmetry,friedman2006bifurcation,friedman2006asymptotic,friedman2008stability}; and Stokes \cite{friedman2006free,friedman2007bifurcationA,friedman2007bifurcationB}), Friedman et al. proved the existence of non-radially symmetric steady states analytically and classified the stability/instability of the boundaries from the Hopf bifurcation point of view. Specifically, in their studies, the bifurcation parameter is characterized by the cell proliferation rate or ratio to cell-cell adhesiveness. Then the authors showed that the boundary stability/instability changes when the parameter crosses a specific bifurcation point. On the other hand, Cristini et al. in \cite{cristini2003nonlinear}, as the pioneers, employ asymptotic analysis to study and predict the tumor evolution. Their work is of great significance to the dynamic simulation of tumors and nurtured more related works in this direction \cites{macklin2007nonlinear,pham2018nonlinear,turian2019morphological,lu2020complex,lu2022nonlinear}. All the research demonstrated that many factors could induce the tumor's boundary instability, including but not limited to vascularization \cites{cristini2003nonlinear,pham2018nonlinear,lu2019nonlinear,lu2022nonlinear}, proliferation \cites{friedman2001symmetry,friedman2008stability,friedman2006free,friedman2007bifurcationB,cristini2003nonlinear,lu2022nonlinear}, apoptosis \cites{friedman2001symmetry,friedman2008stability,friedman2006free,friedman2007bifurcationB,cristini2003nonlinear,turian2019morphological,lu2022nonlinear,pham2018nonlinear,lu2022nonlinear}, cell-cell adhesion \cites{friedman2001symmetry,friedman2008stability,friedman2006free,friedman2007bifurcationB,cristini2003nonlinear,pham2018nonlinear}, bending rigidity \cite{turian2019morphological,lu2019nonlinear}, microenvironment \cite{turian2019morphological,pham2018nonlinear,macklin2007nonlinear,lu2020complex}, chemotaxis \cite{lu2022nonlinear, lu2020complex}.  

In recent decades, tumor modeling from different perspectives has emerged and developed. In particular, one could consider the density model proposed by Byrne and Drasdo in \cite{byrne2009individual}, in which the tumor cell density $\rho$ is governed by a porous medium type equation, and the internal pressure $p$ is induced by the power rule $p=\rho^{m}$ with the parameter $m>1$. The power rule enables $p$ naturally vanish on the tumor boundary. Moreover, the boundary velocity $v$ is governed by Darcy's law $v=-\grad p\vert_{\partial\Omega}$. Previous research indicates that the porous media type equations have an asymptote concerning the parameter m tending to infinity \cites{aronson1998limit,gil2001convergence,igbida2002mesa,kim2003uniqueness,kim2009homogenization}. Motivated by this, Perthame et al. derived the second kind of free boundary model in \cite{perthame2014hele} by taking the incompressible limit (sending $m$ to infinity), or equivalently mesa-limit of the density models. An asymptotic preserving numerical scheme was designed by J.Liu et al. in \cite{liu2018accurate}, the scheme naturally connects the numerical solutions to the density models to that of the free boundary models.

In the mesa-limit free boundary models proposed in \cite{perthame2014hele}, the limit density $\rho_{\infty}$ can only take value in $[0,1]$, and the corresponding limit pressure $p_{\infty}$ is characterized by a monotone Hele-Shaw graph. More specifically, $p_{\infty}$ vanishes on the unsaturated region where $\rho_{\infty}<1$ (see equation \eqref{eqn:HS graph}). The Hele-Shaw graph representation of pressure brings the following advantages. Firstly, in the Hele-Shaw type model, the formation of a necrotic core can be described by an obstacle problem \cite{guillen2022hele}, which leads $\rho_{\infty}$ to decay exponentially in the necrotic core. Due to the Hele-Shaw graph, the pressure $p_{\infty}$ naturally vanishes there instead of taking negative values. Secondly, a transparent regime called "patch solutions" exists, in which $\rho_{\infty}$ remains in the form of $\chi_{D(t)}$, i.e., the indicator function of the tumor region. Again, to satisfy the corresponding Hele-Shaw graph, $p_{\infty}$ has to vanish on the tumor's interface (where $\rho_{\infty}$ drops from $1$ to $0$), which is significantly different from the first kind of free boundary models developed from \cite{greenspan1976growth}, in which the internal pressure relies on the boundary curvature $\kappa$ as mentioned previously. Moreover, in the mesa-limit free boundary models, the boundary velocity is still induced by Darcy's law $v_{\infty}=-\grad p_{\infty}\vert_{\partial\Omega}$. For completeness, the derivation of the mesa-limit model is summarized in Section \ref{sec:model introduction}. Albeit various successful explorations based on such mesa-limit free boundary models \cites{mellet2017hele,david2021free,guillen2022hele,perthame2015incompressible,david2021convergence,kim2003uniqueness,kim2016free,kim2018porous,kim2022density,kim2019singular,david2021incompressible,perthame2014traveling,jacobs2022tumor,liu2021toward,dou2022tumor}, the study on its boundary stability/instability is yet thoroughly open.

The primary purpose of this paper is to investigate whether boundary instability will arise in the mesa-limit free boundary models, which should shed light on the boundary stability of the cell density models when $m$ is sufficiently large. To simplify the discussion, we consider tumors in the avascular stage with saturated cell density so that the density function $\rho_{\infty}$ is a patch solution, and the tumor has a sharp interface. As the first attempt in this regard, we explore the instability caused by nutrient consumption and supply. A similar mechanism can induce boundary instability in other biological systems, see \cite{ben1994generic} for nutrient induce boundary instability in bacterial colony growth models. The role of nutrition in tumor models has been widely studied, and we refer the reader to the latest article in this direction \cite{jacobs2022tumor}. Inspired by \cite{perthame2014traveling}, we divide the nutrient models into two kinds, \emph{in vitro} and \emph{in vivo}, according to the nutrient supply regime. In either regime, the nutrient is consumed linearly in the tumor region with a rate $\lambda>0$. However, in the \emph{in vitro} model, we assume that a liquid surrounds the tumor with nutrient concentration $c_B$. Mathematically, the nutrient concentration remains $c_B$ at the tumor-host interface. For the \emph{in vivo} model, the nutrient is transported by vessels outside the tumor and reaches $c_B$ at the far field. Correspondingly, we assume the exchange rate outside the tumor is determined by the concentration difference from the background, i.e., $c_B-c$. The two nutrient models will be specified more clearly in Section \ref{sec:Two nutrient models}.

Our study of boundary stability/instability consists of two scenarios. We begin with a relatively simple case, the front of traveling waves, in which quantitative properties can be studied more explicitly. In this case, the unperturbed tumor region corresponds to a half plane with the boundary being a vertical line propagating with a constant normal velocity. Then we test the boundary stability/instability by adding a perturbation with frequency $l\in\mathbb{R}^+$ and amplitude $\delta$. Our analysis shows that in the \emph{in vitro} regime, $\delta$ always decreases to zero as time propagates. In other words, the boundary is stable for any frequency perturbation. In contrast, \emph{in vivo} regime, there exists a threshold value $L$ such that the perturbation with a frequency smaller than $L$ becomes unstable when the nutrient consumption rate, $\lambda$, is larger than one. 

The above case corresponds to the boundary stability/instability while the tumor is infinitely large. In order to further explore the influence of the finite size effect on the boundary stability/instability, we consider the perturbation of radially symmetric boundary with different wave numbers $l\in\mathbb{N}$ and radius $R$. Our analysis shows that the \emph{in vitro} regime still suppresses the increase of perturbation amplitude and stabilizes the boundary regardless of the consumption rate, perturbation wave number, and tumor size. For the \emph{in vivo} regime, when the consumption rate $\lambda$ is less than or equal to one, the boundary behaves identically the same as the \emph{in vitro} case. However, when $\lambda$ is greater than one, the continuous growth of tumor radius will enable perturbation wave number to become unstable in turn (from low to high). Further more, as $R$ is approaching infinity, the results in the radial case connect to the counterparts in the traveling wave case.

The main contribution of this work is to show that tumor boundary instability can be induced by nutrient consumption and supply. As a by-product, our results indicate that the cell apoptosis and curvature-dependent boundary conditions present abundantly in previous studies (e.g., \cites{cristini2003nonlinear, friedman2007bifurcationA}) are unnecessary for tumor boundary instability formation.

The paper is organized as follows. In Section \ref{sec:preliminary}, we first derive our free boundary models by taking the incompressible limit of density models characterized by porous medium type equations in Section \ref{sec:model introduction}. Besides that, we also introduce the \emph{in vitro} and \emph{in vivo} nutrient regimes in this subsection. Furthermore, the corresponding analytic solutions are derived in Section \ref{sec:Analytic solutions}. Section \ref{sec:Framework of perturbation analysis} is devoted to introducing the linear perturbation technique in a general framework. Then, by using the technique in Section \ref{sec:Framework of perturbation analysis}, we study the boundary stability of the traveling wave and the radially symmetric boundary under the two nutrient regimes, respectively, in Section \ref{sec:traveling wave} and Section \ref{sec:radial boundary} (with main results in Section \ref{sec:Set up for the traveling wave perturbation} and Section \ref{sec:Set up for the radial boundary perturbation}). Finally, we summarize our results and discuss future research plans in Section \ref{Conclusion}.

\section{Preliminary}
\label{sec:preliminary}
\subsection{model introduction}
\label{sec:model introduction}
\subsubsection{The cell density model and its Hele-Shaw limit}
To study the tumor growth under nutrient supply, let $\rho(x,t)$ denote the cell population density and $c(x,t)$ be the nutrient concentration. We assume the production rate of tumor cells is given by the growth function $G(c)$, which only depends on the nutrient concentration. On the other hand, we introduce
\begin{equation}
\label{eqn:set D}
    D(t)=\left\{\rho(x,t)>0\right\}
\end{equation}
to denote the support of $\rho$. Physically, it presents the tumoral region at time $t$. We assume the tumoral region expands with a finite speed governed by the Darcy law $v=-\grad p$ via the pressure $p(\rho)=\rho^m$. Thus, the cell density $\rho$ satisfies the equation:
\begin{equation}
\label{eqn:pme density eqn}
    \frac{\partial}{\partial t}\rho-\grad\cdot\left(\rho\grad p(\rho)\right)=\rho G(c),\quad x\in\mathbb{R}^2,\quad t\geq 0.
\end{equation}
For the growth function $G(c)$, we assume 
\begin{equation}
    G(c)=G_0 c,\quad\text{with }\quad G_0>0,
\end{equation}
note that in contrast to the nutrient models in \cites{tang2014composite,perthame2014hele}, we eliminate the possibility of the formation of a necrotic core by assuming that $G(\cdot)$ is always positive and linear (for simplicity), since this project aims to study the boundary instability induced by the nutrient distribution itself.

Many researches, e.g. \cites{perthame2014hele,david2021free,david2021incompressible,kim2016free,kim2018porous,guillen2022hele},  indicate that there is a limit as $m\rightarrow\infty$ which turns out to be a solution to a free boundary problem of Hele-Shaw type. To see what happens, we multiply equation \eqref{eqn:pme density eqn} by $m\rho^{m-1}$ on both sides to get
\begin{equation}
    \frac{\partial}{\partial t}p(\rho)=\abs{\grad p(\rho)}^2+m p(\rho)\lap p(\rho)+m G_0 p(\rho)c.
\end{equation}
Hence, if we send $m\rightarrow\infty$, we formally obtain the so called \emph{complementarity condition} (see \cites{perthame2014hele,david2021free} for a slight different model):
\begin{equation}
\label{eqn:complementary}
    p_{\infty}(\lap p_{\infty}+G_0 c)=0.
\end{equation}
On the other hand, the cell density $\rho(x,t)$ converges to the weak solution (see \cite{perthame2014hele}) of 
\begin{equation}
\label{eqn:rho infty}
       \frac{\partial}{\partial t}\rho_{\infty}-\grad\cdot\left(\rho_{\infty}\grad p_{\infty}\right)=\rho_{\infty} G(c),
\end{equation}
and $p_{\infty}$ compels the limit density $\rho_{\infty}$ only take value in the range of $\left[0,1\right]$ for any initial date $\rho_0\in\left[0,1\right]$ (see Theorem 4.1 in 
\cite{perthame2014hele} for a slightly different model).
Moreover, the limit pressure $p_{\infty}$ belongs to the Hele-Shaw monotone graph:
\begin{equation}
\label{eqn:HS graph}
p_{\infty}(\rho_{\infty})=\left\{
  \begin{array}{rcr}
    0, \qquad 0\leq\rho_{\infty}<1,\\
    \left[0,\infty\right),\qquad \rho_{\infty}=1.\\
  \end{array}
\right.
\end{equation}
The incompressible limit and the \emph{complementarity condition} of a fluid mechanical related model have been rigorously justified in \cites{perthame2014hele,david2021free}. And the incompressible limit of \eqref{eqn:pme density eqn} (coupled with nutrient models that will be introduced in the next section) was verified numerically in \cite{liu2018analysis}. 

We define the support of $p_{\infty}$ to be
\begin{equation}
\label{eqn:set D infty}
    D_{\infty}(t)=\left\{p_{\infty}(x,t)>0\right\},
\end{equation}
then \eqref{eqn:complementary} and \eqref{eqn:HS graph} together yield 
\begin{subequations}
\label{eqn:p_infty}
\begin{alignat}{2}
\label{eqn:p_infty eqn}
-\lap p_{\infty}=G_0 c\qquad&\text{for}\quad x\in D_{\infty}(t),\\
\label{eqn:p_infty bc}
p_{\infty}=0,\qquad&\text{for}\quad x\in \mathbb{R}^2\setminus D_{\infty}(t),
\end{alignat}
and $\rho_{\infty}=1$ in $D_{\infty}$.
\end{subequations}
Therefore, once the nutrient concentration $c(x,t)$ is known one can recover $p_{\infty}$ from the elliptic equation above. 

Now we justify the relationship between $D(t)$ and $D_{\infty}(t)$. Observe that when $m$ is finite, $\rho$ and $p(\rho)$ have the same support $D(t)$, whereas as $m$ tends to infinity, $\rho_{\infty}$ may have larger support than $p_{\infty}$. However, a large class of initial data, see e.g. \cite{perthame2016some}, enable the free boundary problem (correspond to \eqref{eqn:rho infty} and \eqref{eqn:p_infty}) possess patch solutions, i.e., $\rho_{\infty}=\chi_{D_{\infty}}$, where $\chi_{A}$ stands for the indicator function of the set $A$. In this case, the support of $p_{\infty}$ coincides with that of $\rho_{\infty}$. Moreover, the boundary velocity $v$ is governed by Darcy law $v=-\grad p_{\infty}$. Further, the boundary moving speed along the normal direction at the boundary point $x$, denote by $\sigma(x)$, is given by:
\begin{equation}
\label{eqn:general boudary speed}
  \sigma(x)=-\grad p_{\infty}\cdot\hat{n}(x),
\end{equation}
where $\hat{n}(x)$ is the outer unit normal vector at $x\in \partial D_{\infty}(t)$.
The boundary speed for more general initial data was studied in \cite{kim2018porous}. 

As the end of this subsection, we emphasize that in our free boundary model, as the limit of the density models, the pressure $p_{\infty}$ always vanishes on $\partial D_{\infty}$. However, as mentioned previously, in the first kind free boundary models, the internal pressure $\Tilde{p}$ is assumed to satisfy the so-called Laplace-Young condition (or some other curvature dependent boundary condition). Mathematically, the boundary condition \eqref{eqn:p_infty bc} is replaced by
\begin{equation}
\label{eqn:curvature condition}
    \Tilde{p}(x)=\gamma \kappa(x),
\end{equation}
where $\gamma>0$ is a constant coefficient, and $\kappa(x)$ denotes the curvature at the boundary point $x$. In the related studies, the curvature condition \eqref{eqn:curvature condition} plays an essential role (e.g., \cites{cristini2003nonlinear, friedman2007bifurcationA}).
\subsubsection{Two nutrient models}
\label{sec:Two nutrient models}
Regarding the nutrient, it diffuses freely over the two dimensional plane. However, inside the tumoral region, the cells consume the nutrient. While outside the tumor, the nutrient exchanges with the far field concentration $c_B$ provided by the surrounding environment or vasculature. It follows that the following reaction-diffusion equation can govern the consumption, exchange, and diffusion of the nutrient in general:
\begin{equation}
\label{eqn:parabolic nutrient}
    \tau\partial_t c-\lap c +\Psi(\rho,c)=0,
\end{equation}
where $\tau$ is the characteristic time scale of the nutrient change, and $\Psi(\rho,c)$ describes the overall effects of the nutrient supply regime outside the tumor and the nutrient consumption by cells inside the tumor. To simplify the mathematical analysis, we drop the time derivative in \eqref{eqn:parabolic nutrient} and consider following elliptic formulation instead
\begin{equation}
\label{eqn:general nutrient eqn}
    -\lap c +\Psi(\rho,c)=0.
\end{equation}
This is reasonable because $\tau\ll 1$ (see, e.g., \cite{greenspan1972models,adam2012survey,byrne1996growth}). As in \cite{perthame2014traveling}, two specific developed and widely studied models are the \emph{in vitro} and the \emph{in vivo} model.

For the \emph{in vitro} model, we assume that the tumor is surrounded by a liquid in which the exchange rate with the background is so fast that the nutrient concentration can be assumed to be the same constant $c_B$ as that of the surrounding liquid, while inside the tumoral region, the consumption function is bi-linear in both the concentration $c$ and the cell density $\rho$ with consumption rate $\lambda>0$. The boundary instability was observed in models where tissues aggressively consume nutrients \cite{maury2014congestion}. Therefore, in our models, we expect boundaries are more likely to be unstable when $\lambda$ is large.
When the \emph{in vitro} is coupled with the cell density model \eqref{eqn:pme density eqn}, equation \eqref{eqn:general nutrient eqn} writes
\begin{subequations}
\label{eqn:finite m invitro}
\begin{alignat}{2}
-\lap c + \lambda \rho c = 0,\qquad &\text{for}\quad x\in D(t),\\
c =c_B,\qquad &\text{for}\quad x\in \mathbb{R}^2\setminus D(t).
\end{alignat}
\end{subequations}
By considering the incompressible limit of the density model (sending $m\rightarrow\infty$), and concern patch solutions $\rho_{\infty}=\chi_{D_{\infty}}$. Equation \eqref{eqn:finite m invitro} tends to:
\begin{subequations}
\label{eqn:infinite m invitro}
\begin{alignat}{2}
-\lap c + \lambda c = 0,\qquad &\text{for}\quad x\in D_{\infty}(t),\\
c =c_B,\qquad &\text{for}\quad x\in \mathbb{R}^2\setminus D_{\infty}(t).
\end{alignat}
\end{subequations}

For the \emph{in vivo} model, the consumption of nutrients within the tumor region (where $\rho > 0$) remains the same as in the \emph{in vitro} model. However, in the \emph{in vivo} model, the nutrients are provided by vessels of the healthy tissue surrounding the tumor, while the healthy tissue consumes nutrients as well. This leads to the nutrient supply outside the tumor being determined by the concentration difference from the background, $c_B - c$, with a positive coefficient $\Tilde{\lambda}$. Mathematically, the overall function $\Psi(\rho,c)$ is written as $\Psi(\rho,c)=\lambda\rho c\cdot\chi_{D}-\Tilde{\lambda}(c_B-c)\cdot\chi_{D^c}$. For simplicity, we set $\Tilde{\lambda}=1$ and $\lambda>0$. Note that this expression guarantees the nutrient concentration reaches $c_B$ at the far field. A more detailed discussion of this issue can be found in \cites{chatelain2011emergence,jagiella2012parameterization}. 

With the same reason as the previous case, by taking $m\rightarrow\infty$ in the density model and concerning patch solutions, we get the \emph{in vivo} nutrient equations for the limit free boundary model,
\begin{subequations}
\label{eqn:infinite m invivo}
\begin{alignat}{2}
-\lap c + \lambda c = 0,\qquad &\text{for}\quad x\in D_{\infty}(t),\\
-\lap c =c_B-c,\qquad &\text{for}\quad x\in \mathbb{R}^2\setminus D_{\infty}(t).
\end{alignat}
\end{subequations}
Moreover, we need to emphasize that the \emph{in vivo} we refer to is different from the previous articles (see, e.g., \cite{cristini2003nonlinear}) in which \emph{in vivo} corresponds to the vascularization inside the tumor. 

The uneven growth phenomena in the tumor models are conjectured due to the non-uniform distribution of nutrients \cite{maury2014congestion}. More precisely, in contrast to the fingertips region, the nutrient is inadequate around the valley since more cells consume nutrients there. Consequently, the tissue around the tips grows faster than the valleys, and therefore instability occurs. In the \emph{in vitro} model, the concentration of the nutrient will match the background concentration $c_B$ at the boundary regardless of the regions. However, for the \emph{in vivo} model, the nutrient is directly available only from healthy tissue; this regime will enlarge the concentration difference at the tips and valleys. Therefore, we expect tumor borders are more prone to grow unevenly in the \emph{in vivo} models, in particular when the consumption rate $\lambda$ is relatively large.

\subsection{Analytic solutions}
\label{sec:Analytic solutions}
Starting from this section, we focus on the mesa limit free boundary models. Therefore, for simplicity of the notations, we drop the free boundary models' subscripts and use $D(t)$, $\rho$, and $p$ to denote the tumoral region, cell density, and pressure in the limit model. On the other hand, through this paper, we use $I_j$ and $K_j$ ($j\in\mathbb{N}$) to denote the second kind of modified Bessel functions, their definitions and basic properties are reviewed in Appendix \ref{sec:Properties of Bessel functions}.

The models introduced in Section \ref{sec:model introduction} have been studied in \cite{liu2018analysis} when $\lambda=1$. In particular, the authors derived $2D$ radially symmetric solutions for the free boundary models, which are coupled with either the \emph{in vitro} or the \emph{in vivo} model. Moreover, their computation yields that as the radius of the tumor tends to infinity, the boundary velocity tends to be a finite constant. In other words, the radially symmetric solutions converge to traveling wave solutions. 

For self-consistency, we recall the derivation of the radially symmetric solutions in \cite{liu2018analysis} in this section. Besides that we also derive the traveling wave solutions for the two nutrient models and verify that they are indeed the limit of the radially symmetric solutions as radius goes to infinity. The analytical solutions in this section will serve as the cornerstone of subsequent perturbation analysis. Now, we begin with the traveling wave scenario.

\subsubsection{traveling plane solution for the \emph{in vitro} model}
\label{sec:traveling plane solution for the invitro model}
For solving two-dimensional traveling wave solutions, we fix the traveling front at $\xi=x-\sigma t=0$, where $\sigma$ stands for the traveling speed and will be determined later. Without loss of generality, let the tumoral region be the left half plane, that is $D(t)=\left\{(\xi,y)\vert \xi\leq 0\right\}$. One can easily see that in the unperturbed two-dimensional problem, to find its solution reduces to solve a one-dimensional problem. Moreover, we disclose that the variable $y$ will serve as the perturbation parameter in the perturbation problems, which will be investigated later. The one dimensional problem writes:
\begin{subequations}
\label{eqn:TW in vitro}
\begin{alignat}{2}
-\partial^2_{\xi}c + \lambda c = 0,\qquad &\text{for}\quad \xi\leq 0,\\
c =c_B,\qquad&\text{at}\quad \xi= 0,
\end{alignat}
in addition, we also assume the concentration of nutrient vanish at the center of tumor, that is
\begin{equation}
    c(\xi)=0,\qquad \text{for}\quad \xi= -\infty.
\end{equation}
\end{subequations}
And the equations for pressure $p(\xi,y)$, i.e., \eqref{eqn:p_infty} and \eqref{eqn:general boudary speed} reads
\begin{subequations}
\label{eqn:TW p}
\begin{alignat}{2}
-\partial^2_{\xi}p=G_0 c,\qquad&\text{for}\quad \xi\leq 0,\\
p=0,\qquad&\text{for}\quad \xi\geq 0,
\end{alignat}
and traveling speed is given by
\begin{equation}
   \sigma=-\partial_{\xi} p(0). 
\end{equation}
Since the gradient of the pressure is always equal to zero at the center of the tumor, we also have
\begin{equation}
    \partial_{\xi} p(\xi)=0,\qquad \text{for}\quad \xi= -\infty.
\end{equation}
\end{subequations}
By solving \eqref{eqn:TW in vitro} we get
\begin{equation}
\label{eqn:TW nutrient steady soln invitro}
    c=c_B e^{\sqrt{\lambda}\xi},\qquad\text{for}\quad \xi\leq 0,
\end{equation}
plug the above expression into \eqref{eqn:TW p} to solve for $p$ and get:
\begin{equation}
\label{eqn:TW pressure steady soln invitro}
    p(\xi)= -\frac{G_0 c_B}{\lambda}e^{\sqrt{\lambda}\xi}+\frac{G_0 c_B}{\lambda}\qquad\text{for}\quad \xi\leq 0.
\end{equation}
Then, we can further find the traveling speed
\begin{equation}
\label{eqn:TW speed invitro}
    \sigma=-\partial_{\xi}p(0)=\frac{G_0 c_B}{\sqrt{\lambda}}.
\end{equation}

\subsubsection{traveling plane solution for the \emph{in vivo} model}
\label{sec:traveling plane solution for the invivo model}
For the \emph{in vivo} model, the only difference from the \emph{in vitro} model is the equations for $c(\xi,y)$ are replaced by
\begin{subequations}
\label{eqn:TW in vivo}
\begin{alignat}{2}
-\partial^2_{\xi}c + \lambda c = 0,\qquad &\text{for}\quad \xi\leq 0,\\
-\partial^2_{\xi}c = c_B-c,\qquad &\text{for}\quad \xi\geq 0,\\
c(\xi)=0,\qquad&\text{for}\quad \xi= -\infty.
\end{alignat}
in addition, $c$ and $\partial_{\xi} c$ are both continuous at the boundary of the tumor, that is
\begin{equation}
    c(0^-)=c(0^+)\quad\text{and}\quad \partial_{\xi} c(0^-)= \partial_{\xi} c(0^+).
\end{equation}
\end{subequations}
And the pressure $p$ still satisfies \eqref{eqn:TW p}.

By solving \eqref{eqn:TW in vivo} we get
\begin{equation}
\label{eqn:TW nutrient steady soln invivo}
c(\xi)=\left\{
  \begin{array}{rcr}
    \frac{c_B}{\sqrt{\lambda}+1}e^{\sqrt{\lambda}\xi}\defeq c^{\text{(i)}}(\xi)\qquad &\text{for}\quad \xi\leq 0, \\
    -\frac{\sqrt{\lambda}c_B}{\sqrt{\lambda}+1}e^{-\xi}+c_B\defeq c^{\text{(o)}}(\xi)\qquad &\text{for}\quad \xi\geq 0.\\
  \end{array}
\right.
\end{equation}
and plug the above expression into \eqref{eqn:TW p} to derive $p$ and get
\begin{equation}
\label{eqn:TW pressure steady soln invivo}
    p(\xi)= -\frac{G_0 c_B}{\lambda(\sqrt{\lambda}+1)}e^{\sqrt{\lambda}\xi}+\frac{G_0 c_B}{\lambda(\sqrt{\lambda}+1)}\qquad\text{for}\quad \xi\leq 0.
\end{equation}
And the boundary speed is given by
\begin{equation}
\label{eqn:TW speed invivo}
    \sigma=-\partial_{\xi}p(0)=\frac{G_0 c_B}{\lambda+\sqrt{\lambda}}.
\end{equation}
By now, we have finished the derivation for the traveling wave solutions. In the next two subsections, we recall the derivation for the radially symmetric scenario in \cites{liu2018analysis} and verify that the boundary speeds converge to the traveling waves' for the corresponding nutrient regime.
\subsubsection{2D radially symmetric model for the \emph{in vitro} model}
\label{sec:2D radially symmetric model for the in vitro model}
For the radially symmetric free boundary model, the tumoral region becomes $D(t)=\mathbb{B}_{R(t)}(0)$ (a disk centered at origin with radius $R$). In this case, we employ polar coordinates $(r,\theta)$, and we can conclude that the solutions are independent of $\theta$ by symmetry. However the variable $\theta$ will play an important role in the perturbed problem, which will be seen in the later sections. Thus, for the free boundary model with nutrients governed by the \emph{in vitro} model, equation \eqref{eqn:infinite m invitro} can be further written as
\begin{subequations}
\label{eqn:radial in vitro}
\begin{alignat}{2}
-\frac{1}{r}\partial_r(r\partial_r c)  + \lambda c = 0,\qquad &\text{for}\quad r\leq R(t),\\
c =c_B,\qquad &\text{for}\quad r\geq R(t).
\end{alignat}
\end{subequations}
And the equations for pressure $p$ \eqref{eqn:p_infty} and \eqref{eqn:general boudary speed} reads
\begin{subequations}
\label{eqn:radial p}
\begin{alignat}{2}
-\frac{1}{r}\partial_r(r\partial_r p)=G_0 c\qquad&\text{for}\quad r\leq R(t),\\
p=0,\qquad&\text{for}\quad r\geq R(t),\\
\sigma(R(t))=-\partial_r p(R(t)),\qquad&\text{on}\quad\partial \mathbb{B}_{R}(0).
\end{alignat}
And by symmetry, we also require
\begin{equation}
    \partial_r p(0)=0.
\end{equation}
\end{subequations}
By solving \eqref{eqn:radial in vitro} we get
\begin{equation}
\label{eqn:Radial nutrient steady soln invitro}
    c(r,t)=c_B\frac{I_0(\sqrt{\lambda}r)}{I_0(\sqrt{\lambda}R)}\qquad\text{for}\quad r\leq R(t).
\end{equation}
Plug the above expression into \eqref{eqn:radial p} to solve for $p$, and we get:
\begin{equation}
\label{eqn:Radial pressure steady soln invitro}
    p(r,t)=-\frac{G_0 c_B}{\lambda I_0(\sqrt{\lambda}R(t))}I_0(\sqrt{\lambda}r)+\frac{G_0}{\lambda}c_B\qquad\text{for}\quad r\leq R(t).
\end{equation}
And the boundary velocity is given by
\begin{equation}
\label{eqn:radial boundary speed in vitro}
    \dot{R}=\sigma(R(t))=-\frac{\partial p}{\partial r} (R(t))=\frac{G_0 c_B I_1(\sqrt{\lambda}R)}{\sqrt{\lambda}I_0(\sqrt{\lambda}R)}.
\end{equation}
Note that as $R(t)\rightarrow\infty$ the speed limit is $\frac{G_0 c_B}{\sqrt{\lambda}}$, which recovers the speed for the traveling wave solution \eqref{eqn:TW speed invitro}.

\subsubsection{2D radially symmetric model for the \emph{in vivo} model}
\label{sec:2D radially symmetric model for the in vivo model}
The computation is similar to the previous case, except that the equations for nutrient are replaced by
\begin{subequations}
\label{eqn:radial in vivo}
\begin{alignat}{2}
-\frac{1}{r}\partial_r(r\partial_r c)  + \lambda c = 0,\qquad &\text{for}\quad r\leq R(t),\\
-\frac{1}{r}\partial_r(r\partial_r c) = c_B-c,\qquad &\text{for}\quad r\geq R(t).
\end{alignat}
\end{subequations}
By solving above two equations and using the continuity of both $c$ and $\partial_r c$ at $R(t)$, we get
\begin{equation}
\label{eqn:Radial nutrient steady soln invivo}
c(r,t)=\left\{
  \begin{array}{rcr}
    c_B a_0(R) I_0(\sqrt{\lambda}r)\defeq c^{\text{(i)}}(r,t),\qquad &\text{for}\quad r\leq R(t), \\
    c_B(1+b_0(R) K_0(r))\defeq c^{\text{(o)}}(r,t),\qquad &\text{for}\quad r\geq R(t),\\
  \end{array}
\right.
\end{equation}
where $a_0$ and $b_0$ are given by
\begin{subequations}
\label{eqn:Radial a0 and b0}
\begin{alignat}{2}
    a_0(R)&=\frac{K_1(R)}{\sqrt{\lambda}K_0(R)I_1(\sqrt{\lambda}R)+K_1(R)I_0(\sqrt{\lambda}R)}\defeq\frac{K_1(R)}{C(R)},\\
    b_0(R)&=-\frac{\sqrt{\lambda} I_1(R)}{\sqrt{\lambda}K_0(R)I_1(\sqrt{\lambda}R)+K_1(R)I_0(\sqrt{\lambda}R)}\defeq-\frac{\sqrt{\lambda} I_1(R)}{C(R)}.
\end{alignat}
\end{subequations}
Then from the pressure equations \eqref{eqn:radial p}, we can solve and get
\begin{equation}
\label{eqn:Radial pressure steady soln invivo}
    p(r,t)=
    -\frac{G_0 c_B}{\lambda}a_0(R) I_0(\sqrt{\lambda}r)+\frac{G_0 c_B}{\lambda}a_0(R) I_0(\sqrt{\lambda}R),\qquad \text{for}\quad r\leq R(t).
\end{equation}
And the velocity of the boundary is given by
\begin{equation}
\label{eqn:radial boundary speed in vivo}
    \dot{R}=\sigma(R(t))
    =\frac{G_0 c_B K_1(R) I_1(\sqrt{\lambda}R)}{\lambda K_0(R)I_1(\sqrt{\lambda}R)+\sqrt{\lambda}K_1(R)I_0(\sqrt{\lambda}R)}\leq\frac{G_0 c_B I_1(\sqrt{\lambda}R)}{\sqrt{\lambda}I_0(\sqrt{\lambda}R)},
\end{equation}
which implies that the speed in the \emph{in vivo} model is slower than that in the \emph{in vitro} model. Again, by sending $R\rightarrow\infty$, we get the limiting speed for the \emph{in vivo} model is $\frac{c_B G_0}{\lambda+\sqrt{\lambda}}$, which recovers the speed for the traveling wave in \eqref{eqn:TW speed invivo}.

\section{Framework of the perturbation analysis}
\label{sec:Framework of perturbation analysis}
We devote this section to establishing the general framework of our asymptotic analysis. Such analysis involves classical techniques which was originally developed by Mullins et al. in \cite{mullins1963morphological} and widely used in \cites{cristini2003nonlinear,macklin2007nonlinear,pham2018nonlinear,turian2019morphological,lu2020complex,lu2022nonlinear}, whereas we present it as generic methodology which in theory can be applied to other problems as well.

We divide our analysis into three parts as follows. 
\subsection{Perturbation of the boundary}
\label{sec:Perturbation of the boundary}
     We study the perturbation of two kinds of boundaries, the radial boundary and the front of traveling waves, and the relationship between them. In either case, we have a proper coordinate system denoted as $(\zeta,\vartheta)$. For simplicity, we assume the boundary profile is a curve $\mathcal{O}_t\subseteq \mathbb{R}^2$, which can be parameterized by the variable $\vartheta$ in the following form:
     \begin{equation}
         \mathcal{O}_t(\vartheta)=\left\{(\zeta,\vartheta)\vert \zeta=\mathcal{Z}(t,\vartheta), \vartheta\in\mathcal{R}\right\}
     \end{equation}
     with some contour index function $\mathcal{Z}(t,\vartheta)$ and range $\mathcal{R}$.

     For the radial case, the unperturbed tumor region at time $t$ is given by a disk with radius $R(t)$, that is $D(t)=\mathbb{B}_{R(t)}$. In this case, equations and functions are naturally presented in terms of the polar coordinate. Therefore,  $(\zeta,\vartheta)=(r,\theta)$ and $\mathcal{R}=\left[-\pi, \pi\right)$. Further more, the tumor boundary at time $t$ can be written as:
    \begin{equation}
    \label{eqn:radial boundary}
        \mathcal{B}_t(\theta)=\left\{(r,\theta)\vert r=R(t), \theta\in\left[-\pi, \pi\right)\right\}.
    \end{equation}

    For the traveling wave case, we employ the Euler coordinate $(\xi,y)$ (where $\xi=x-\sigma t$). In this case, the tumor region is a half plane with a moving front. We fix the front (propagate to the right) at $\xi=0$ with traveling speed $\sigma$, and the tumor region, therefore, become $D(t)=\left\{(\xi,y)\vert\xi\leq 0\right\}$. Then, we write the traveling front more clearly in the parameter curve form:
    \begin{equation}
    \label{eqn:traveling wave boundary}
        \mathcal{B}(y)=\left\{(\xi,y)\vert\xi=0,  y\in\mathbb{R}\right\}.
    \end{equation} 
    
     For the purpose of introducing perturbation method in a general framework, we combine the two scenarios above in the following unified notations. Let $D(t)$ still presents the tumor region at time $t$; and the boundary curve writes
     \begin{equation}
     \label{eqn:general unperturbed boundary}
        \mathcal{B}_t(\vartheta)=\left\{(\zeta,\vartheta)\vert \zeta=Z(t), \vartheta\in\mathcal{R}\right\}.
     \end{equation}
    Moreover, any point $B\in\mathcal{B}_t$ can be presented as $B(Z,\vartheta_*)$ for some $\vartheta_*\in\mathcal{R}$.
    Note that in either case above, the index function $Z(t)$ is independent on the parameter variable $\vartheta$. More precisely, for the radial case $Z(t)=R(t)$, and \eqref{eqn:general unperturbed boundary} stands for \eqref{eqn:radial boundary}; for the traveling wave case, \eqref{eqn:general unperturbed boundary} stands for \eqref{eqn:traveling wave boundary} with $Z(t)$ takes constant value $0$. 
    
    Next, we add a small perturbation to the two kinds of boundaries. From the parameterization representation point of view, the perturbation replaces the boundary curve \eqref{eqn:general unperturbed boundary} by:
    \begin{equation}
    \label{eqn:boundary perturbation}
        \Tilde{\mathcal{B}}_t(\vartheta)=\left\{(\zeta,\vartheta)\vert\zeta=Z(t)+\delta(t) \mathcal{P}(\vartheta),\vartheta\in\mathcal{R}\right\},
    \end{equation}
    where $\delta(t)\ll 1$ stands for the amplitude of the perturbation, and $\mathcal{P}(\vartheta)$ characterizes the perturbation profile. Thus, the perturbed boundary at time $t$ is still parameterized by the variable $\vartheta$. Intuitively, \eqref{eqn:boundary perturbation} means that the perturbation will push the point $(Z,\vartheta_*)\in\mathcal{B}_t$ to $(Z+\delta\mathcal{P}(\vartheta_*),\vartheta_*)\in\Tilde{\mathcal{B}}_t$ for any $\vartheta_*\in\mathcal{R}$. Note that the perturbation form $\eqref{eqn:boundary perturbation}$ enables the evolution of the perturbation term to reduce to the evolution of the amplitude function $\delta(t)$ while its spatial profile persists. Such an ansatz with temporal and spatial degrees of freedom separated makes sense only when the profile function represents a typical model of a general classical of contours. In the next, we explain how to choose the perturbation profiles in the two cases.
    
    In the radial symmetry case, the profile $\mathcal{P}(\theta)$ is parameterized by $\theta\in[-\pi, \pi)$ and it can be expressed as a Fourier expansions in general. In particular, for the single wave perturbation with wave number $l$, $\mathcal{P}(\theta)$ takes the form of:
    \begin{equation}
        \mathcal{P}(\theta)=C_1\cos{l\theta}+C_2\sin{l\theta},\quad\text{with}\quad l\in\mathbb{N}^+,
    \end{equation}
    where $C_1, C_2$ are constant coefficients. Note that by rotating the coordinate system and rescaling on $\delta(t)$, without loss of generality we can simply take
    \begin{equation}
    \label{eqn:radial perturbation profile}
        \mathcal{P}(\theta)=\cos{l\theta}\defeq\mathcal{P}_{l}(\theta).
    \end{equation}
    
    For the traveling wave case, the profile is parameterized by $y\in\mathbb{R}$. By a similar reason to the radial case, we can simply consider
    \begin{equation}
    \label{eqn:TW perturbation profile}
        \mathcal{P}_{l}(y)=\cos{l y}, \quad\text{with}\quad l\in\mathbb{R}^+,
    \end{equation}
    otherwise we can just shift the profile along $y$-axis.
    
    It is important to note that for the perturbation of the traveling wave, we are actually allowed to take $\mathcal{P}(y)=\cos{l y}$ with $l\in\mathbb{R}^+$. However, only integer frequencies perturbation are reasonable for the radial case, since $\mathcal{P}(\theta)$ has to be a $2\pi$-periodic function. 
\subsection{Solutions after perturbation}
     Let $\Tilde{D}(t)$, enclosed by $\Tilde{\mathcal{B}}_t$, denote the tumoral region after the perturbation. Then the perturbed functions $(\Tilde{c},\Tilde{p},\Tilde{\rho})$ satisfy the equations (boundary conditions will be specified in the next subsection):
    \begin{subequations}
    \label{eqn:perturbed equations in general}
    \begin{alignat}{2}
    -\lap \Tilde{c} +\Psi(\Tilde{\rho},\Tilde{c})=0,\qquad&\text{on}\quad\mathbb{R}^2,\\
    -\lap \Tilde{p}=G_0 \Tilde{c},\qquad&\text{in}\quad\Tilde{D}(t),
    \end{alignat}
    recall that $\Psi(\Tilde{\rho},\Tilde{c})$ reads \eqref{eqn:infinite m invitro} in the \emph{in vitro} model and \eqref{eqn:infinite m invivo} in the \emph{in vivo} model. 
    \end{subequations}
   When the boundary perturbation vanishes, \eqref{eqn:perturbed equations in general} reduce to the the unperturbed problem, where the solutions are given in a closed-form. In the presence of the boundary perturbation, we still have $\Tilde{\rho}=\chi_{\Tilde{D}}$ since it remains as a patch, but the solution to $\Tilde{c}$ and $\Tilde{p}$ are no longer available. However, we can alternatively seek asymptotic solutions of  $\Tilde{c}$ and $\Tilde{p}$ with respect to $\delta$, while the condition $\Tilde{\rho}=\chi_{\Tilde{D}}$ help to linearize the calculation. We elaborate the  asymptotic analysis procedures as follows.
    
    Firstly, corresponding to the small perturbation \eqref{eqn:boundary perturbation}, we have the following asymtotic expansion with respect to the small value $\delta$:
    \begin{subequations}
    \label{eqn:delta asymtotic expansion}
    \begin{alignat}{2}
    \Tilde{c}(\zeta,\vartheta,t)&=c_0(\zeta,t)+\delta c_1(\zeta,\vartheta,t)+O(\delta^2),\\
    \Tilde{p}(\zeta,\vartheta,t)&=p_0(\zeta,t)+\delta p_1(\zeta,\vartheta,t)+O(\delta^2).
    \end{alignat}
    \end{subequations}
   Since the perturbation scale is assumed to be very small, i.e., $\delta\ll 1$, the behavior of the perturbed solutions are dominated by the unperturbed ones. Thus, the leading order terms $c_0(\zeta,t)$ and $p_0(\zeta,t)$ take the same expression as the solutions without perturbation, which have been solved in Section \ref{sec:Analytic solutions}. On the other hand, the main response corresponding to the perturbation are captured by the first-order terms $c_1(\zeta,\vartheta,t)$ and $p_1(\zeta,\vartheta,t)$. Note that besides variable $\zeta$ they depend on $\vartheta$ as well. 
   
   We continue to investigate the structures of $c_1$ and $p_1$ when the perturbation profile \eqref{eqn:boundary perturbation} is given by $\mathcal{P}(\vartheta)=\mathcal{P}_{l}(\vartheta)$, here $\mathcal{P}_{l}(\vartheta)$ presents \eqref{eqn:radial perturbation profile} or \eqref{eqn:TW perturbation profile} in the respective case. In either case, the perturbed tumoral region $\Tilde{D}$ still possess a symmetry, or periodicity, respect to the parameter $\vartheta$ ($\theta$ for the radial case and $y$ for the traveling wave case). Then we have following conclusion for the perturbed solutions $(\Tilde{c},\Tilde{p})$.
   \begin{lemma}
   If the perturbed solutions are unique, then they must process the same periodicity as the boundary geometry.
   \end{lemma}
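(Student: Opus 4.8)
The plan is to exploit the equivariance of the full boundary-value problem \eqref{eqn:perturbed equations in general} under the symmetry group that leaves the perturbed domain $\Tilde{D}(t)$ invariant, and then to invoke the uniqueness hypothesis to conclude that the solution itself must be fixed by that group.

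First I would identify precisely the symmetry of the perturbed boundary $\Tilde{\mathcal{B}}_t$. Since the single-wave profile is $\mathcal{P}_l(\vartheta)=\cos(l\vartheta)$, the contour index $Z(t)+\delta(t)\cos(l\vartheta)$ is invariant under the shift $\vartheta\mapsto\vartheta+2\pi/l$ (a rotation by angle $2\pi/l$ in the radial case, a translation by $2\pi/l$ along the $y$-axis in the traveling wave case) and under the reflection $\vartheta\mapsto-\vartheta$. Let $T$ denote any rigid motion of $\mathbb{R}^2$ realizing such a symmetry; by construction $T(\Tilde{D}(t))=\Tilde{D}(t)$, and consequently $\chi_{\Tilde{D}}\circ T=\chi_{\Tilde{D}}$, so $\Tilde{\rho}=\chi_{\Tilde{D}}$ is itself $T$-invariant.

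Next I would check that the operators in \eqref{eqn:perturbed equations in general} commute with $T$. The Laplacian is invariant under rigid motions, and the reaction term $\Psi(\Tilde{\rho},\Tilde{c})$ depends on the geometry only through $\Tilde{\rho}=\chi_{\Tilde{D}}$, which we have just shown is $T$-invariant; the far-field and interface data ($c_B$ at infinity or on the boundary, and $\Tilde{p}=0$ off $\Tilde{D}$) are likewise $T$-invariant because they are prescribed on $T$-invariant sets. Hence, writing $c^{T}\defeq\Tilde{c}\circ T$ and $p^{T}\defeq\Tilde{p}\circ T$, the pair $(c^{T},p^{T})$ satisfies exactly the same equations and boundary conditions as $(\Tilde{c},\Tilde{p})$.

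Finally I would apply the uniqueness hypothesis: since $(c^{T},p^{T})$ and $(\Tilde{c},\Tilde{p})$ solve the same problem, they must coincide, i.e. $\Tilde{c}\circ T=\Tilde{c}$ and $\Tilde{p}\circ T=\Tilde{p}$ for every $T$ in the symmetry group. This is precisely the assertion that the perturbed solutions inherit the periodicity (and reflection symmetry) of $\Tilde{\mathcal{B}}_t$. The step I would be most careful about, and the main obstacle, is confirming that the boundary conditions still to be specified in the next subsection are themselves imposed in a $T$-equivariant manner — in particular that the interface values and the matching conditions across $\partial\Tilde{D}$ are prescribed symmetrically; once that is granted, the equivariance argument goes through verbatim.
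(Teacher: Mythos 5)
Your proposal is correct and follows essentially the same route as the paper's proof: define the symmetry transformation $\vartheta\mapsto\vartheta+2\pi/l$ preserving $\Tilde{D}(t)$, observe that the equations (via the isotropy of the Laplacian and the invariance of $\Tilde{\rho}=\chi_{\Tilde{D}}$) and the boundary conditions are invariant under it, and invoke uniqueness to identify the solution with its translate. Your write-up merely makes the equivariance check slightly more explicit (composing the solution with $T$ and noting the additional reflection symmetry) than the paper does, but the argument is the same.
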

   \begin{proof}
   For either scenario, the front of traveling wave or radially symmetric boundary, we assume the boundary has periodicity $\vartheta^*$. Then, with respect to \eqref{eqn:boundary perturbation} we have:
   \begin{equation}
       \zeta(\vartheta)=\zeta(\vartheta+\vartheta^*),
   \end{equation}
   where $\zeta(\vartheta)\defeq Z(t)+\delta\mathcal{P}(\vartheta)$. For $\mathcal{P}(\vartheta)=\cos{l\vartheta}$, $\vartheta^*$ is given by $\vartheta^*=\frac{2\pi}{l}$.
   We define the translation operator $\tau_{\vartheta^*}(\zeta,\vartheta):(\zeta,\vartheta)\mapsto (\zeta,\vartheta+\vartheta^*)$. One can easily observe that the nutrient equations (for either \emph{in vitro} or \emph{in vivo}) and the pressure equation are both invariant under $\tau_{\vartheta^*}$ since the operator simply corresponds to a translation or a rotation, and diffusion operator is isotropic. Moreover, the boundary geometry and boundary conditions remain the same under the operator $\tau_{\vartheta^*}$ as well. Thus, the uniqueness of the solution yield that the unique solutions $c^*$ and $p^*$ must possess the same periodicity as the boundary geometry. That is, 
   \begin{subequations}
    \begin{alignat}{2}
    c^*(\zeta,\vartheta)&=c^*(\tau_{\vartheta^*}(\zeta,\vartheta)),\\
    p^*(\zeta,\vartheta)&=p^*(\tau_{\vartheta^*}(\zeta,\vartheta)).
    \end{alignat}
    \end{subequations}
   \end{proof}
   According to the above lemma, to be consistent with the boundary's periodicity, we expand $c_1(\zeta, \vartheta,t)$ and $p_1(\zeta, \vartheta,t)$ as Fourier series, and \eqref{eqn:delta asymtotic expansion} can be further written as:
    \begin{subequations}
    \label{eqn:c&p general expansion}
    \begin{alignat}{2}
    \Tilde{c}(\zeta,\vartheta,t)&=c_0(\zeta,t)+\delta(t)\Sigma_{k=1}^{\infty} c_1^k(\zeta,t)\mathcal{P}_l^k(\vartheta)+O(\delta^2),\\
    \Tilde{p}(\zeta,\vartheta,t)&=p_0(\zeta,t)+\delta(t) \Sigma_{k=1}^{\infty}p_1^k(\zeta,t)\mathcal{P}_l^k(\vartheta)+O(\delta^2),
    \end{alignat}
    where $\mathcal{P}_l^k(\vartheta)=\cos{k l \vartheta}$.
    \end{subequations}
    In the above expansions, $c_1^k(\zeta,t)$ and $p_1^k(\zeta,t)$ (with $k\in\mathbb{N}^+$) serve as the Fourier coefficients with $O(1)$. From the calculation in the later sections (Section \ref{sec:TW The detailed calculations for the two nutrient regimes} and Section \ref{sec:Radial The detailed calculations for the two nutrient regimes}), we will see that only $c_1^1$ and $p_1^1$, the coefficients of the wave number that is the same as the perturbation, do not vanish. Therefore, it suffices to keep the first term in the series \eqref{eqn:c&p general expansion} and drop the superscript in $c_1^1$, $p_1^1$ and, $\mathcal{P}_l^1$ . Thus, \eqref{eqn:c&p general expansion} writes
    \begin{subequations}
    \label{eqn:c&p general expansion2}
    \begin{alignat}{2}
    \label{eqn:c general expansion2}
    \Tilde{c}(\zeta,\vartheta,t)&=c_0(\zeta,t)+\delta(t) c_1(\zeta,t)\mathcal{P}_l(\vartheta)+O(\delta^2),\\
    \label{eqn:p general expansion2}
    \Tilde{p}(\zeta,\vartheta,t)&=p_0(\zeta,t)+\delta(t) p_1(\zeta,t)\mathcal{P}_l(\vartheta)+O(\delta^2).
    \end{alignat}
    \end{subequations}
In the traveling wave case, the dependency of $t$ can be removed for the terms $c_j$ and $p_j$ ($j=\left\{0,1\right\}$), since the unperturbed tumor boundary do not evolve in time.   
Finally, by plugging the expansion \eqref{eqn:c&p general expansion2} into \eqref{eqn:perturbed equations in general} and collect the first order terms we get
\begin{subequations}
\label{eqn:general soln for first order terms}
\begin{alignat}{2}
\label{eqn:general soln for first order terms I}
-\lap (c_1(\zeta,t)\mathcal{P}_l(\vartheta)) +\lambda c_1(\zeta,t)\mathcal{P}_l(\vartheta)=0,\qquad&\text{in}\quad \Tilde{D}(t),\\
\label{eqn:general soln for first order terms II}
-\lap (p_1(\zeta,t)\mathcal{P}_l(\vartheta))=G_0 (c_1(\zeta,t)\mathcal{P}_l(\vartheta)),\qquad&\text{in}\quad \Tilde{D}(t),
\end{alignat}
for either nutrient regime. In addition, for the \emph{in vivo} model $c_1$ also satisfies 
\begin{equation}
\label{eqn:general soln for first order terms III}
    -\lap (c_1(\zeta,t)\mathcal{P}_l(\vartheta)) + c_1(\zeta,t)\mathcal{P}_l(\vartheta)=0,\qquad\text{in}\quad \mathbb{R}^2\setminus\Tilde{D}(t).
\end{equation}
\end{subequations}
where we used the fact that the zero order terms satisfy \eqref{eqn:perturbed equations in general}. By solving \eqref{eqn:general soln for first order terms}, one can get the solutions of $c_1$ and $p_1$ for the respective models. Note that \eqref{eqn:general soln for first order terms} implies the expression of $c_1$ and $p_1$ depend on the wave number $l$. The detailed computation will be carried out for the specific cases in the later sections. 
\subsection{Match the boundary condition}
In the last part of this section, we explain how to determine the particular solutions of $c_1$ and $p_1$ by matching the boundary conditions. We also show that by using the expression of $p_1$, one can determine the evolution of the perturbation magnitude.

In this section, we always assume the perturbation profile $\mathcal{P}(\vartheta)$ is given by $\mathcal{P}_l(\vartheta)$. And note that given $t$ for any $\vartheta\in\mathcal{R}$, $(Z+\delta\mathcal{P}_l(\vartheta),\vartheta)$ presents a point on the perturbed boundary $\Tilde{\mathcal{B}}_t$, which is originally at the position $(Z,\vartheta)\in\mathcal{B}_t$. Recall that $c_0$ and $c_1$ (similarly for $p_0$ and $p_1$) only depend on the variable $\zeta$ in space, and the unperturbed boundary $\mathcal{B}_t$ is characterized as the contour of $\zeta$ with level set index $Z(t)$ (see \eqref{eqn:general unperturbed boundary}).

Since the analytical solutions are not available for the perturbed problem, it is not practical to enforce the boundary conditions in the precise way. Instead, since we seek the first order perturbation solutions due to the boundary variation, we can approximately match the the perturbed solutions at the perturbed boundary up to $O(\delta^2)$ error with the their evaluations at the unperturbed boundary.

For the \emph{in vitro} model, the perturbed solution $\Tilde{c}$ satisfies the boundary condition:
\begin{equation}
\label{eqn: perturbed invitro bc}
  \Tilde{c}=c_B,\quad\text{at}\quad\Tilde{\mathcal{B}}_t.
\end{equation}
Thus by using expansion \eqref{eqn:c general expansion2}, we can evaluate $\Tilde{c}$ at the perturbed boundary point $(Z+\delta\mathcal{P}_l(\vartheta),\vartheta)$ to get
\begin{align}
\label{eqn:nutrient expansion}
\Tilde{c}(Z+\delta\mathcal{P}_l(\vartheta),\vartheta,t)
&=c_0(Z+\delta\mathcal{P}_l(\vartheta),t)+\delta c_1(Z+\delta\mathcal{P}_l(\vartheta),t)\mathcal{P}_l(\vartheta)+O(\delta^2)\\
&=c_0(Z,t)+\delta \partial_{\zeta}c_0(Z,t)\mathcal{P}_l(\vartheta)+\delta c_1(Z,t)\mathcal{P}_l(\vartheta)+O(\delta^2)\nonumber,
\end{align}
where we used the Taylor expansions for $c_0(Z+\delta\mathcal{P}_l,t)$ and $c_1(Z+\delta\mathcal{P}_l,t)$.
By the boundary conditions of the perturbed and unperturbed problems, we have $\Tilde{c}(Z+\delta\mathcal{P}_l(\vartheta),\vartheta,t)=c_0(Z,t)=c_B$ for arbitrary $\vartheta\in\mathcal{R}$. Thus the zero order terms in \eqref{eqn:nutrient expansion} are canceled out, and by balancing the first order terms in \eqref{eqn:nutrient expansion} we get
\begin{equation}
\label{eqn:invitro nutrient boundary}
    \partial_{\zeta}c_0(Z,t)+c_1(Z,t)=0.
\end{equation}
While for the \emph{in vivo} model, $\Tilde{c}$ and its normal derivative are both continuous at $\Tilde{\mathcal{B}}_t$. And in either kind of boundary, the normal derivative of $\Tilde{c}(\zeta,\vartheta,t)$ on $\Tilde{\mathcal{B}}_t$ is given by $\partial_{\zeta}\Tilde{c}(\zeta,\vartheta,t)$ for any $\Tilde{B}(\zeta,\vartheta)\in\Tilde{\mathcal{B}}_t$. And if we decompose the solution $\Tilde{c}$ according to the regions as $\Tilde{c}=\Tilde{c}^{\text{(i)}}\chi_{\Tilde{D}}+\Tilde{c}^{\text{(o)}}\chi_{\mathbb{R}^2\setminus\Tilde{D}}$, i.e., let $\Tilde{c}^{\text{(i)}}$ denotes the nutrient solution inside the tumor, and $\Tilde{c}^{\text{(o)}}$ presents the outside solution. Then, the continuity at the boundary yields
\begin{subequations}
\label{eqn:continuity of c at perturbed boundary}
\begin{alignat}{2}
\Tilde{c}^{\text{(i)}}(Z+\delta\mathcal{P}_l(\vartheta),\vartheta,t)=\Tilde{c}^{\text{(o)}}(Z+\delta\mathcal{P}_l(\vartheta),\vartheta,t),\qquad&\forall\vartheta\in\mathcal{R}\\
\partial_{\zeta}\Tilde{c}^{\text{(i)}}(Z+\delta\mathcal{P}_l(\vartheta),\vartheta,t)=\partial_{\zeta}\Tilde{c}^{\text{(o)}}(Z +\delta\mathcal{P}_l(\vartheta),\vartheta,t),\qquad&\forall\vartheta\in\mathcal{R},
\end{alignat}
\end{subequations}
With the same spirit of \eqref{eqn:nutrient expansion}, for $\partial_{\zeta}\Tilde{c}(Z+\delta\mathcal{P}_l(\vartheta),\vartheta,t)$ we have
\begin{equation}
\label{eqn:nutrient derivative expansion}
    \partial_{\zeta}\Tilde{c}(Z+\delta\mathcal{P}_l(\vartheta),\vartheta,t)=\partial_{\zeta}c_0(Z,t)+\partial^2_{\zeta}c_0(Z,t)\delta\mathcal{P}_l(\vartheta)+\partial_{\zeta}c_1(Z,t)\delta\mathcal{P}_l(\vartheta)+O(\delta^2).
\end{equation}
 Since $c_0$ is the solution to the unperturbed problem (given by \eqref{eqn:Radial nutrient steady soln invivo} or \eqref{eqn:TW nutrient steady soln invivo} for the respect case), $c_0$ and $c_0'$ are both continuous at the unperturbed boundary $\mathcal{B}_t=\left\{\zeta=Z(t)\right\}$. More precisely,
 \begin{align}
     c_0^{\text{(i)}}(Z,t)&=c_0^{\text{(o)}}(Z,t),\\
     \partial_{\zeta}c_0^{\text{(i)}}(Z,t)&=\partial_{\zeta}c_0^{\text{(o)}}(Z,t).
 \end{align}
 Thus, by using the expansions \eqref{eqn:nutrient expansion} and \eqref{eqn:nutrient derivative expansion}, the boundary condition \eqref{eqn:continuity of c at perturbed boundary} yields 
\begin{subequations}
\label{eqn:invivo nutrient boundary}
\begin{alignat}{2}
    c_1^{\text{(i)}}(Z,t)&=c_1^{\text{(o)}}(Z,t),\\
    \partial^2_{\zeta}c_0^{\text{(i)}}(Z,t)+\partial_{\zeta}c_1^{\text{(i)}}(Z,t)&=\partial^2_{\zeta}c_0^{\text{(o)}}(Z,t)+\partial_{\zeta}c_1^{\text{(o)}}(Z,t).
\end{alignat}
\end{subequations}
By using \eqref{eqn:invitro nutrient boundary} or \eqref{eqn:invivo nutrient boundary} as the boundary condition for $c_1$, we can work out the particular solution of $c_1$ in the respective cases. We mention that when the boundary is the traveling front, to carry out the full expression of $c_1$, we also need to use the boundary condition $c_1(-\infty,y)=c_1(+\infty,y)=0$ for any $y\in\mathbb{R}$, which is derived from $\Tilde{c}(-\infty,y)=0$ and $\Tilde{c}(+\infty,y)=c_B$ for any $y\in\mathbb{R}$. The detail calculations will be carried out for each specific case later (Section \ref{sec:TW The detailed calculations for the two nutrient regimes} and Section \ref{sec:Radial The detailed calculations for the two nutrient regimes}).

In either nutrient model, the perturbed pressure solution $\Tilde{p}$ satisfies the boundary condition 
\begin{equation}
\label{eqn:general perturbed pressure bc}
    \Tilde{p}=0,\quad\text{at}\quad\Tilde{\mathcal{B}}_t.
\end{equation}
Similar to the previous calculations. By using the expansion \eqref{eqn:p general expansion2} to evaluate $\Tilde{p}$ at $(Z+\delta\mathcal{P}_l(\vartheta))\in\Tilde{\mathcal{B}}_t$, we get
\begin{equation}
\label{eqn:pressure expansion}
    \Tilde{p}(Z+\delta\mathcal{P}_l(\vartheta),\vartheta,t)
=p_0(Z,t)+\partial_{\zeta}p_0(Z,t)\delta\mathcal{P}_l(\vartheta)+p_1(Z,t)\delta\mathcal{P}_l(\vartheta)+O(\delta^2).
\end{equation}
The perturbed and unperturbed boundary condition yield that $\Tilde{p}(Z+\delta\mathcal{P}_l(\vartheta),\vartheta,t)$ and $p_0(Z,t)$ both equal to zero. In particular, $p_0$ as the unperturbed solution has already been solved in the Section \ref{sec:Analytic solutions}. Thus we get 
\begin{equation}
\label{eqn:pressure boundary}
    \partial_{\zeta}p_0(Z,t)+p_1(Z,t)=0.
\end{equation}
Then by using the expression of $c_1$ (see \eqref{eqn:general soln for first order terms II}) and \eqref{eqn:pressure boundary}, we can further determine the particular solution of $p_1$. For the traveling wave case, we also use the condition $\partial_{\zeta}p_1(-\infty,y)=0$ for $\forall y\in\mathbb{R}$. Finally, the normal boundary speed \eqref{eqn:general boudary speed} yields:
\begin{equation}
\label{eqn:perturbed boundary speed}
    \frac{d (Z(t)+\delta(t)\mathcal{P}_l(\vartheta))}{dt}=-\partial_{\zeta}\Tilde{p}(Z+\delta\mathcal{P}_l(\vartheta),\vartheta,t).
\end{equation}
By plugging the expression of $\Tilde{p}$ into \eqref{eqn:perturbed boundary speed} and taking Taylor expansion for the $\zeta$ variable, we get
\begin{equation}
    \frac{d Z}{dt}+\frac{d\delta}{dt}\mathcal{P}_l(\vartheta)
    =-\left(\partial_{\zeta}p_0(Z,t)+\partial^2_{\zeta}p_0(Z,t)\delta\mathcal{P}_l(\vartheta)+\partial_{\zeta}p_1(Z,t)\delta\mathcal{P}_l(\vartheta)+O(\delta^2)\right).
\end{equation}
Since the unperturbed problem yields $\frac{d Z}{dt}=-\partial_{\zeta}p_0(Z,t)$, the above identity can be further simplified into
\begin{equation}
\label{eqn:change rate of delta}
    \delta^{-1}\frac{d\delta}{dt}=-\left(\partial^2_{\zeta}p_0(Z,t)+\partial_{\zeta}p_1(Z,t)+O(\delta)\right).
\end{equation}
In the end, we determine the evolution of the perturbation magnitude by the sign of $\delta^{-1}\frac{d\delta}{dt}$. If it is positive, then it implies the growing of the magnitude. For the radial case, $Z(t)$ is given by the unperturbed tumor radius $R(t)$, therefore $\delta^{-1}\frac{d\delta}{dt}\sim-(\partial^2_{r}p_0(R(t),t)+\partial_{r}p_1(R(t),t))$. Note that the leading order of $\delta^{-1}\frac{d\delta}{dt}$ is independent on $\theta$, which parameterize the boundaries. While, for the traveling wave case, $Z(t)=0$, thus $\delta^{-1}\frac{d\delta}{dt}\sim-(\partial^2_{\xi}p_0(0)+\partial_{\xi}p_1(0))$, which is independent of $y$. Furthermore, under the same nutrient regime, we expect that the boundary instability of the radius case will coincide with that of the traveling wave when $R$ increase to infinity.

\section{Stability of traveling waves in the two nutrient models}
\label{sec:traveling wave}
In this section, we study the boundary stability of the traveling wave front under two nutrient regimes. In Section \ref{sec:Set up for the traveling wave perturbation}, we establish the set up and main conclusions. In Section \ref{sec:TW The detailed calculations for the two nutrient regimes}, we work out the expression of $\delta^{-1}\frac{d\delta}{d t}$ for the two nutrient models, which serves as the proof of Theorem \ref{thm:main thm for TW}. And in Section \ref{sec:Boundary stability analysis for the two nutrient models TW}, we prove the mathematical properties of $\delta^{-1}\frac{d\delta}{d t}$ summarized in Corollary \ref{cor:TW}, and these properties further yield the boundary behaviors summarized in Remark \ref{rmk:TW}.

\subsection{Setup and main results}
\label{sec:Set up for the traveling wave perturbation}
As presented in Section \ref{sec:Perturbation of the boundary}, in the traveling wave case we employ the Euler coordinate system $(\xi,y)$.  In the absence of perturbation, the tumor boundary is defined by \eqref{eqn:traveling wave boundary} with the level set index $Z(t)=0$, and the tumor region is the left half space $D(t)=\left\{(\xi,y)\vert\xi\leq 0\right\}$. Then following the framework of Section \ref{sec:Framework of perturbation analysis}, we consider the perturbation by a single wave mode:
\begin{equation}
\mathcal{P}_l(y)=\cos{ly}\qquad\text{with}\quad l\in\mathbb{R}^+,
\end{equation}
thus the perturbed boundary \eqref{eqn:boundary perturbation} writes:
    \begin{equation}
    \label{eqn:TW boundary perturbation}
        \Tilde{\mathcal{B}}_t(y)=\left\{(\xi,y)\vert\xi=\delta(t) \cos{ly}, y\in\mathbb{R}\right\},
    \end{equation}
and the perturbed tumor region becomes
    \begin{equation}
    \label{eqn:TW perturbed tumor region}
        \Tilde{D}(t)=\left\{(\xi,y)\vert\xi\leq\delta(t) \cos{ly}, y\in\mathbb{R}\right\}.
    \end{equation}
Then correspond to the above perturbation, the perturbed solutions $c$ and $p$ solves \eqref{eqn:perturbed equations in general}. Note that we dropped the tilde of the perturbed solutions for simplicity. Further more, the perturbed solutions possess the asymptotic expansions:
   \begin{subequations}
    \label{eqn:TW delta asymtotic expansion}
    \begin{alignat}{2}
    \label{eqn:TW delta asymtotic expansion c1}
    c(\xi,y,t)&=c_0(\xi)+\delta(t) c_1(\xi,y)+O(\delta^2),\\
    \label{eqn:TW delta asymtotic expansion p1}
    p(\xi,y,t)&=p_0(\xi)+\delta(t) p_1(\xi,y)+O(\delta^2),
    \end{alignat}
    where the leading order terms $c_0$ and $p_0$ correspond to the solution of the unperturbed problems, which have been solved in Section \ref{sec:traveling plane solution for the invitro model} and Section \ref{sec:traveling plane solution for the invivo model} for the respective nutrient model. And the first order terms $c_1(\xi,y,t)$ and $p_1(\xi,y,t)$ can be further expanded as
    \begin{alignat}{2}
    \label{eqn:TW expansion of c1}
    c_1(\xi,y)=\Sigma_{k=1}^{\infty} c_1^k(\xi)\cos{kly},\\
    \label{eqn:TW expansion of p1}
    p_1(\xi,y)=\Sigma_{k=1}^{\infty} p_1^k(\xi)\cos{kly}.
    \end{alignat}
    with $l\in\mathbb{R}^+$, so that $c_1$ has the same periodicity as the boundary geometry.
    \end{subequations}
However, from the calculation later we will see that $c_1^k(\xi)=p_1^k(\xi)=0$ for any $k\neq 1$. 

For the \emph{in vivo} model, we use the superscript $\text{(i)}$ or $\text{(o)}$ to denote the solution inside or outside the tumor region $\Tilde{D}(t)$. Then according to \eqref{eqn:general soln for first order terms I} and \eqref{eqn:general soln for first order terms III} we have
\begin{subequations}
\label{eqn:TW invivo c1 eqns}
\begin{alignat}{2}
-\lap c_1^{\text{(i)}}(\xi,y)+\lambda c_1^{\text{(i)}}(\xi,y)=0,\\
-\lap c_1^{\text{(o)}}(\xi,y)+c_1^{\text{(o)}}(\xi,y)=0.
\end{alignat}
\end{subequations}
And by using the expansion in \eqref{eqn:nutrient expansion} and \eqref{eqn:nutrient derivative expansion}, the series form of $c_1(\xi,y)$ in $\eqref{eqn:TW expansion of c1}$, the boundary condition \eqref{eqn:continuity of c at perturbed boundary} yields
\begin{subequations}
\begin{alignat}{2}
\label{eqn:TW invivo cts at tumor boundary a}
    c_1^{\text{(i)},k}(0)&=c_1^{\text{(o)},k}(0),\quad\forall k\in\mathbb{N}^+,\\
\label{eqn:TW invivo cts at tumor boundary b}
    \partial_{\xi}c_1^{\text{(i)},k}(0)&=\partial_{\xi}c_1^{\text{(o)},k}(0),\quad\forall k\geq 2,\\
\label{eqn:TW invivo cts at tumor boundary c}
    \partial^2_{\xi}c_0^{\text{(i)}}(0)+\partial_{\xi}c_1^{\text{(i)},1}(0)&=\partial^2_{\xi}c_0^{\text{(o)}}(0)+\partial_{\xi}c_1^{\text{(o)},1}(0).
\end{alignat}
Further more, the assumptions $c(-\infty,y)=0$ and $c(+\infty,y)=c_B$ for any $y\in\mathbb{R}$ yields
\begin{alignat}{2}
\label{eqn:TW c_1 -infty}
    c_1^{\text{(i)},k}(-\infty)=0,\\
\label{eqn:TW c_1 +infty}
    c_1^{\text{(o)},k}(+\infty)=0,
\end{alignat}
for any $k\in\mathbb{N}^+$.
\end{subequations}

For the \emph{in vitro} model, $c$ presents the nutrient solution inside the tumor and equation \eqref{eqn:general soln for first order terms I} writes
\begin{equation}
\label{eqn:TW c1 eqns invitro}
    -\lap c_1(\xi,y)+\lambda c_1(\xi,y)=0,\quad\text{in}\quad \Tilde{D}(t).
\end{equation}
By using \eqref{eqn:nutrient expansion} and the series expansion of $c_1$ in \eqref{eqn:TW expansion of c1}, the boundary condition \eqref{eqn: perturbed invitro bc} yields:
\begin{subequations}
\label{eqn:TW invitro boundary condtion}
\begin{alignat}{2}
\partial_{\zeta}c_0(0)+c_1^1(0)=0.\\
c_1^{k}(0)=0,\quad\forall k\geq 2.
\end{alignat}
and similar to the \emph{in vivo} model, the assumption $c(-\infty,y)=0$ for any $y\in\mathbb{R}$ gives
\begin{equation}
\label{eqn:TW invitro -infty c1}
    c_1^k(-\infty)=0,\qquad\forall k\in\mathbb{N}^+.
\end{equation}
\end{subequations}

Once $c_1(\xi,y)$ is determined, we can move on to the study of the first order term of pressure, i.e., $p_1(\xi,y)$. Under either nutrient regime, $p_1(\xi,y)$ satisfies the equation:
\begin{equation}
\label{eqn:TW p1 equation}
    -\lap p_1(\xi,y)=G_0 c_1(\xi,y),\quad\text{in}\quad\Tilde{D}(t).
\end{equation}
By using the expansion \eqref{eqn:pressure expansion}, the series form of $p_1$ in \eqref{eqn:TW expansion of p1}, the boundary condition \eqref{eqn:general perturbed pressure bc} yields
\begin{subequations}
\label{eqn:TW p1 boundary conditions}
\begin{alignat}{2}
\partial_{\zeta}p_0(0)+p_1^1(0)=0,\\
p_1^{k}(0)=0,\quad\forall k\geq 2.
\end{alignat}
On the other hand, for the traveling wave case we require $\partial_{\zeta}p(-\infty,y)=0$, which further yields $\partial_{\xi}p_1(-\infty,y)=0$. Therefore,
\begin{equation}
    \partial_{\xi}p_1^k(-\infty)=0,\qquad\text{for}\quad\forall k\in\mathbb{N}^+.
\end{equation}
\end{subequations}
Once the expression of $p_1(\xi,y)$ is determined, we can further work out the expression of $\delta^{-1}\frac{d\delta}{d t}$ for the two nutrient regimes, which determines the evolution of the perturbation amplitude. Now we establish the main conclusions, and the details of the calculation will be left to the next subsection. 
\begin{theorem}
\label{thm:main thm for TW}
Given growing rate $G_0>0$, background concentration $c_B>0$, nutrient consumption rate $\lambda>0$, and perturbation frequency $l\in\mathbb{R}^+$. The perturbation evolution function, $\delta^{-1}\frac{d\delta}{d t}$, of the \emph{in vitro} model is given by:
\begin{equation}
\label{eqn:TW invitro evolution fcn}
\delta^{-1}\frac{d\delta}{dt}=\frac{G_0 c_B}{\sqrt{\lambda}}\cdot(\sqrt{\lambda}-\sqrt{\lambda+l^2})+O(\delta)\defeq F_1(\lambda,l)+O(\delta).
\end{equation}
For the \emph{in vivo} model, $\delta^{-1}\frac{d\delta}{d t}$ is given by:
\begin{align}
\label{eqn:TW invivo evolution fcn}
\delta^{-1}\frac{d\delta}{dt}
&=\frac{G_0 c_B}{\sqrt{\lambda}}\left(\frac{\sqrt{\lambda}-l}{\sqrt{\lambda}+1}+\frac{l-\sqrt{\lambda+l^2}}{\sqrt{\lambda+l^2}+\sqrt{1+l^2}}\right)+O(\delta)\\
&\defeq F_2(\lambda,l)+O(\delta).\nonumber
\end{align}
\end{theorem}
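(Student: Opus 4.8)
The plan is to specialize the master evolution law \eqref{eqn:change rate of delta} to the traveling wave geometry, where the level-set index is constant, $Z(t)\equiv 0$, so that the entire task reduces to computing two scalars and assembling
\[
\delta^{-1}\frac{d\delta}{dt}=-\bigl(\partial^2_{\xi}p_0(0)+\partial_{\xi}p_1^1(0)\bigr)+O(\delta),
\]
where $p_1^1$ denotes the coefficient of the single surviving Fourier mode $\cos(ly)$. The zeroth-order contribution is immediate: differentiating the closed-form unperturbed pressures \eqref{eqn:TW pressure steady soln invitro} and \eqref{eqn:TW pressure steady soln invivo} twice and evaluating at $\xi=0$ gives $\partial^2_{\xi}p_0(0)=-G_0 c_B$ in the \emph{in vitro} case and $\partial^2_{\xi}p_0(0)=-G_0 c_B/(\sqrt{\lambda}+1)$ in the \emph{in vivo} case. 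All the remaining work lies in producing $p_1^1$.

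First I would solve for the first-order nutrient correction. Writing $c_1=c_1^1(\xi)\cos(ly)$ and substituting into \eqref{eqn:TW c1 eqns invitro} (\emph{in vitro}) or \eqref{eqn:TW invivo c1 eqns} (\emph{in vivo}) reduces the Helmholtz-type equation to the ODE $\partial^2_{\xi}c_1^1=(\lambda+l^2)c_1^1$ inside the tumor and, for the \emph{in vivo} model, $\partial^2_{\xi}c_1^{\text{(o)},1}=(1+l^2)c_1^{\text{(o)},1}$ outside. The far-field conditions \eqref{eqn:TW c_1 -infty}--\eqref{eqn:TW c_1 +infty} (and \eqref{eqn:TW invitro -infty c1}) select the decaying exponentials $e^{\sqrt{\lambda+l^2}\xi}$ inside and $e^{-\sqrt{1+l^2}\xi}$ outside. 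The amplitudes are then fixed by the linearized interface conditions: in the \emph{in vitro} case the single relation \eqref{eqn:TW invitro boundary condtion} gives $c_1^1(0)=-\partial_{\xi}c_0(0)=-\sqrt{\lambda}c_B$, whereas in the \emph{in vivo} case the pair \eqref{eqn:TW invivo cts at tumor boundary a} and \eqref{eqn:TW invivo cts at tumor boundary c} produces a small linear system whose solution is the interior amplitude $A=-\sqrt{\lambda}c_B/(\sqrt{\lambda+l^2}+\sqrt{1+l^2})$. Along the way one checks that for $k\geq 2$ each mode solves a homogeneous ODE with homogeneous interface and far-field data (\eqref{eqn:TW invivo cts at tumor boundary a}--\eqref{eqn:TW invivo cts at tumor boundary b}), hence vanishes, which rigorously justifies keeping only $k=1$.

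With $c_1$ in hand I would solve the pressure correction from \eqref{eqn:TW p1 equation}, which for the surviving mode reads $-\partial^2_{\xi}p_1^1+l^2 p_1^1=G_0 c_1^1$. The general solution is a homogeneous piece $\propto e^{l\xi}$ (decaying at $-\infty$) plus a particular solution proportional to $e^{\sqrt{\lambda+l^2}\xi}$, obtained by matching exponents; note that $-\partial_\xi^2+l^2$ acting on this exponential yields the factor $l^2-(\lambda+l^2)=-\lambda$, which supplies the $1/\sqrt{\lambda}$ prefactors in the final formulas. The far-field condition in \eqref{eqn:TW p1 boundary conditions} is automatically satisfied by this ansatz, and the interface relation $\partial_\xi p_0(0)+p_1^1(0)=0$ fixes the homogeneous coefficient. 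Differentiating $p_1^1$ and evaluating at $\xi=0$ then furnishes $\partial_{\xi}p_1^1(0)$; substituting into the displayed master formula and simplifying the resulting expression in $\sqrt{\lambda}$, $\sqrt{\lambda+l^2}$, $\sqrt{1+l^2}$ yields $F_1(\lambda,l)$ and $F_2(\lambda,l)$ respectively.

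I expect the only genuinely delicate point to be the \emph{in vivo} interface matching. Because $c_0$ obeys different equations on the two sides, its second normal derivative is discontinuous across $\xi=0$, and condition \eqref{eqn:TW invivo cts at tumor boundary c} forces this jump to enter the determination of $A$. This is precisely the mechanism that makes $F_2$ differ from $F_1$ and that will ultimately drive instability when $\lambda$ is large, so it is worth isolating cleanly rather than absorbing into algebra. The remaining risk is bookkeeping—keeping the interior/exterior exponential signs and the three continuity relations consistent, and carrying out the final collapse to closed form—which I would control by organizing the \emph{in vivo} step around the explicit linear system for $(A,B)$ and by using the factorization $\sqrt{\lambda}+\lambda=\sqrt{\lambda}(1+\sqrt{\lambda})$ at the step where the two zeroth-order boundary terms combine.
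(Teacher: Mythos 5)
Your proposal is correct and follows essentially the same route as the paper's proof in Section \ref{sec:TW The detailed calculations for the two nutrient regimes}: mode-by-mode ODEs with decaying exponentials selected by the far-field conditions, vanishing of all $k\geq 2$ modes, the \emph{in vivo} amplitude $-\sqrt{\lambda}c_B/(\sqrt{\lambda+l^2}+\sqrt{1+l^2})$ fixed by the jump in $\partial_\xi^2 c_0$ across the interface, and the pressure correction $p_1^1 = Ae^{l\xi}+(\text{particular }e^{\sqrt{\lambda+l^2}\xi}\text{ term})$ with $A$ determined by $\partial_\xi p_0(0)+p_1^1(0)=0$, all assembled through \eqref{eqn:change rate of delta} with $Z\equiv 0$. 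All the intermediate scalars you compute ($\partial_\xi^2 p_0(0)=-G_0c_B$ and $-G_0c_B/(\sqrt{\lambda}+1)$, the factor $-\lambda$ from $-\partial_\xi^2+l^2$, the vanishing homogeneous coefficient in the \emph{in vitro} case) agree with the paper's.
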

\begin{figure}
\label{fig:F2}
  \begin{center}
    \includegraphics[width=0.48\textwidth]{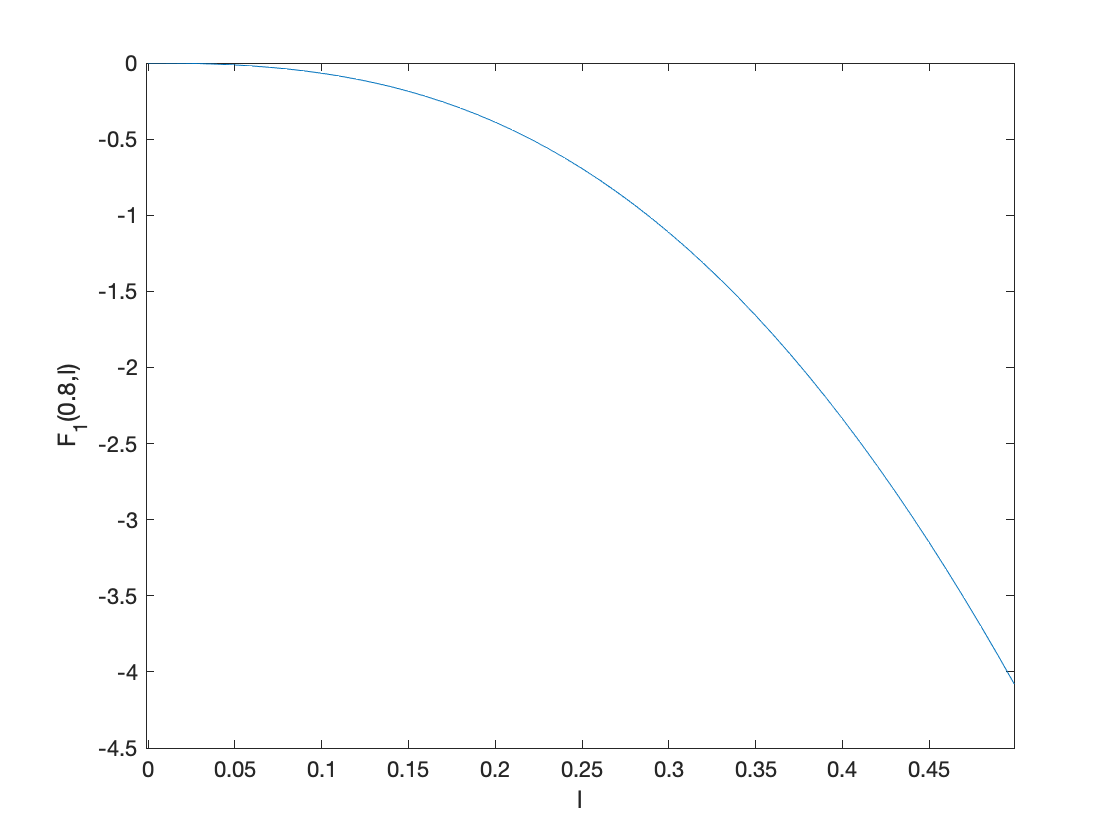}
    \includegraphics[width=0.48\textwidth]{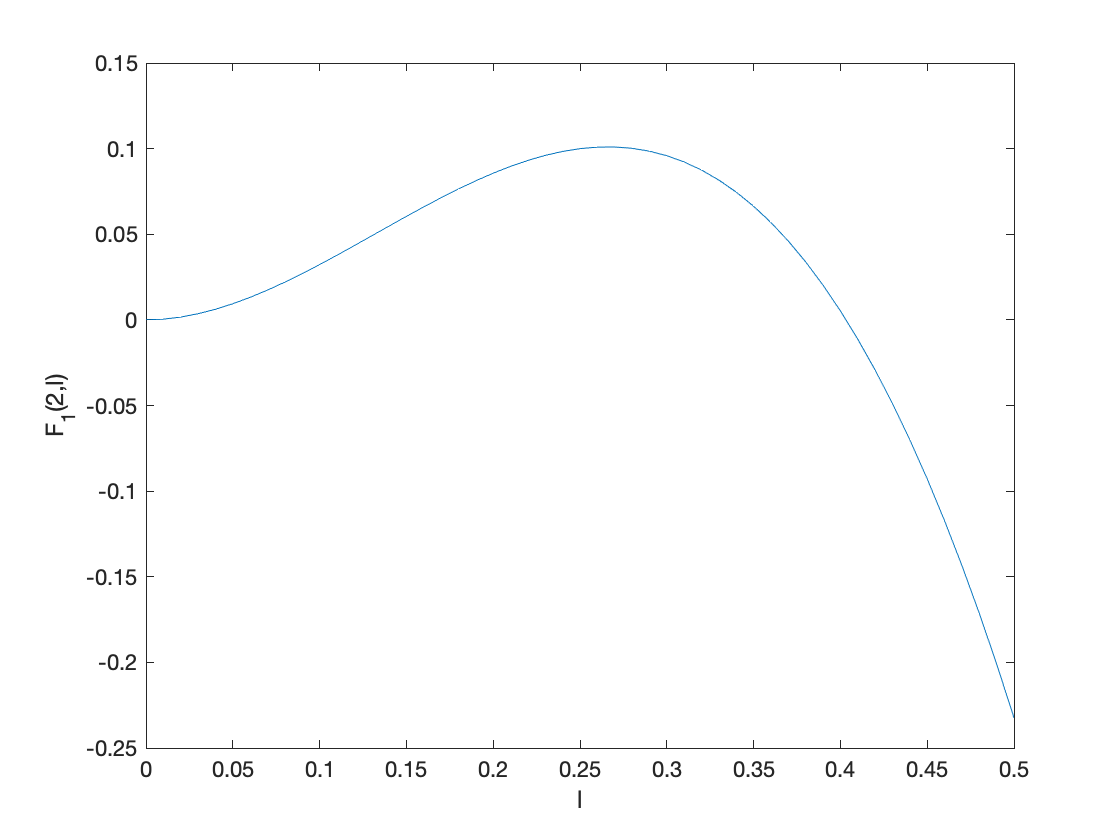}
    \caption{$F_2(\lambda,l)$ for $G_0=1$, $c_B=100$, left one: $\lambda=0.8$, right one: $\lambda=2$.}
    \label{fig:TW invivo}
  \end{center}
\end{figure}
Note that in either nutrient regime, the value of $G_0,c_B>0$ serve as scaling parameters, therefore do not influence the quantitative behavior of $\delta^{-1}\frac{d\delta}{dt}$. For the
\emph{in vitro} model, one can easily check that $F_1(\lambda,l)$ is always negative. For the \emph{in vivo} model, $F_2(\lambda, l)$ is plotted in Figure \ref{fig:TW invivo} for different choice of $\lambda$. Base on the expression of $F_1$, $F_2$, and the observations from Figure \ref{fig:TW invivo}, we prove following mathematical properties for the evolution equations.

\begin{corollary}
\label{cor:TW}
Fix $G_0>0$ and $c_B>0$. For any $\lambda>0$ and $l>0$, $F_1(\lambda,l)$ in \eqref{eqn:TW invitro evolution fcn} is always negative, therefore the perturbation amplitude always decreases in the \emph{in vitro} model. 
For the \emph{in vivo} model, $\delta^{-1}\frac{d\delta}{dt}$ is given by $F_2(\lambda,l)$ as in \eqref{eqn:TW invivo evolution fcn}. When $l$ approaches zero, $F_2$ has the asymptote:
\begin{subequations}
\begin{alignat}{2}
\label{eqn:F_2 asymp 0}
F_2(\lambda,l)\sim\frac{(\lambda-1)\cdot l^2+O(l^3)}{2\lambda(\sqrt{\lambda+1})(\sqrt{\lambda+l^2}+\sqrt{1+l^2})},\\
\intertext{and the limit at infinity:}
\label{eqn:F_2 asymp infty}
\lim_{l\rightarrow+\infty}F_2(\lambda,l)\rightarrow-\infty.
\end{alignat}
\end{subequations}
Further more, for $\lambda>1$ there exists $L>0$ such that $F_2(\lambda,l)>0$ for $l\in(0,L)$. And for $0<\lambda\leq 1$, we have $F_2(\lambda,l)<0$ for any $l>0$. And in particular, when $\lambda=1$, $F_2$ can be further simplified into:
\begin{equation}
\label{eqn:F1 lambda=1}
    F_2(1,l)=\frac{l(1-\sqrt{1+l^2})}{2\sqrt{1+l^2}}<0,\qquad\text{for}\quad l>0.
\end{equation}
\begin{remark}
The mathematical properties of $F_2(\lambda,l)$ in Corollary \ref{cor:TW} imply the following boundary behaviors:
\label{rmk:TW}
\begin{enumerate}
    \item  When the consumption rate is relatively large, the amplitude of low-frequency perturbations can grow in time, and the boundary propagation, therefore, becomes unstable. However, the amplitude of high-frequency perturbation decays. Correspondingly, the perturbed, wave like, boundary degenerates to the vertical line. 
    \item When the nutrient consumption rate is relatively small, the perturbation amplitude decreases for perturbation of any frequency, i.e., the wave like boundary always evolve to a vertical line in this regime.
\end{enumerate}
\end{remark}
\begin{remark}
For either nutrient model, $\delta^{-1}\frac{d\delta}{dt}\rightarrow 0$ as $l\rightarrow 0$, we claim that this relate to the single wave perturbation of the radially symmetric solution as its radius $R$ goes to infinity. This relationship will be further discussed in Section \ref{sec:Relationship between the Radial boundary and the traveling wave boundary}.
\end{remark}
\end{corollary}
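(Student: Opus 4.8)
The plan is to treat $F_1$ and $F_2$ as explicit functions of $(\lambda,l)$, noting first that the prefactor $\tfrac{G_0 c_B}{\sqrt{\lambda}}>0$ is an overall scaling that never affects signs. The claim $F_1<0$ in \eqref{eqn:TW invitro evolution fcn} is then immediate, since $l>0$ forces $\sqrt{\lambda+l^2}>\sqrt{\lambda}$. The specialization at $\lambda=1$ is a direct substitution into \eqref{eqn:TW invivo evolution fcn}: collapsing the two fractions gives $\tfrac{l}{2}\bigl(\tfrac{1}{\sqrt{1+l^2}}-1\bigr)$ up to the scaling, which is \eqref{eqn:F1 lambda=1} and is visibly negative. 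For the behaviour near $l=0$ I would Taylor-expand $\sqrt{\lambda+l^2}=\sqrt{\lambda}+\tfrac{l^2}{2\sqrt{\lambda}}+O(l^4)$ and $\sqrt{1+l^2}=1+\tfrac{l^2}{2}+O(l^4)$; the point is that the $O(1)$ and $O(l)$ contributions of the two fractions cancel, leaving a leading term of order $l^2$ with coefficient proportional to $\sqrt{\lambda}-1$, which yields \eqref{eqn:F_2 asymp 0}. The limit \eqref{eqn:F_2 asymp infty} follows from the conjugate identity $l-\sqrt{\lambda+l^2}=\tfrac{-\lambda}{\,l+\sqrt{\lambda+l^2}\,}\to0$, so the second fraction vanishes while the first behaves like $-\tfrac{l}{\sqrt{\lambda}+1}\to-\infty$. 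Finally, the existence of $L>0$ when $\lambda>1$ is a consequence of \eqref{eqn:F_2 asymp 0}: the $l^2$-coefficient is strictly positive for $\lambda>1$, and since $F_2(\lambda,0)=0$ and $F_2$ is continuous in $l$, positivity persists on some interval $(0,L)$.

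The substantive assertion is the global negativity $F_2(\lambda,l)<0$ for \emph{every} $l>0$ when $0<\lambda\le1$, which the asymptotics alone do not give. Writing $P=\sqrt{\lambda+l^2}$ and $Q=\sqrt{1+l^2}$ and combining the two fractions over the common positive denominator $(\sqrt{\lambda}+1)(P+Q)$, the inequality $F_2<0$ is equivalent to
\[
  f(l):=(P-l)(\sqrt{\lambda}+1)-(\sqrt{\lambda}-l)(P+Q)>0 .
\]
The difficulty is that $f$ mixes the two radicals $P$ and $Q$: isolating and squaring to remove them must be done twice and produces an unwieldy polynomial inequality, so locating a clean route is the main obstacle.

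My plan to avoid the brute-force computation is the substitution $u:=P-\sqrt{\lambda}=\tfrac{l^2}{P+\sqrt{\lambda}}$ and $w:=Q-1=\tfrac{l^2}{Q+1}$, which turns $f=u(1+l)-(\sqrt{\lambda}-l)\,w$ into
\[
  f(l)=l^2\left(\frac{1+l}{P+\sqrt{\lambda}}-\frac{\sqrt{\lambda}-l}{Q+1}\right),
\]
so that for $l>0$ the sign of $f$ equals the sign of $(1+l)(Q+1)-(\sqrt{\lambda}-l)(P+\sqrt{\lambda})$. This last quantity I would show positive by an elementary comparison of positive factors using $\sqrt{\lambda}\le1$: when $\sqrt{\lambda}-l\le0$ the claim is trivial since the subtracted product is nonpositive, and otherwise one has $\sqrt{\lambda}-l\le 1-l<1+l$ together with $P=\sqrt{\lambda+l^2}\le\sqrt{1+l^2}=Q$ and $\sqrt{\lambda}\le1$, hence $P+\sqrt{\lambda}\le Q+1$; multiplying these bounds gives $(\sqrt{\lambda}-l)(P+\sqrt{\lambda})<(1+l)(Q+1)$. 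This settles $F_2<0$ on all of $l>0$, including the boundary case $\lambda=1$, consistently with \eqref{eqn:F1 lambda=1}. The only genuinely delicate point is recognizing the factorization that extracts $l^2$ and reduces the radical inequality to a comparison of products; everything else is bookkeeping.
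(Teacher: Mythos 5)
Your proposal is correct, and for most of the corollary — the sign of $F_1$, the cancellation of the $O(1)$ and $O(l)$ terms in the small-$l$ expansion, the $l\to\infty$ limit, the existence of $L$ for $\lambda>1$ from positivity near $l=0$, and the direct substitution at $\lambda=1$ — it follows essentially the same path as the paper. The one substantive step, global negativity of $F_2$ for $0<\lambda\le 1$, you handle by a genuinely different route. The paper combines the two fractions over the common denominator (writing $P=\sqrt{\lambda+l^2}$, $Q=\sqrt{1+l^2}$, numerator $N(\lambda,l)$), disposes of $l\ge\sqrt{\lambda}$ by inspection, and on $0<l<\sqrt{\lambda}\le 1$ proves $N<0$ by calculus: it computes $\partial N/\partial l$, bounds it above by $l(\sqrt{\lambda}-1-2l)/\sqrt{\lambda+l^2}<0$ using $P+Q\ge\sqrt{\lambda}+1$ and $Q\ge P$, and integrates from $N(\lambda,0)=0$. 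You instead rationalize: with $u=P-\sqrt{\lambda}=l^2/(P+\sqrt{\lambda})$ and $w=Q-1=l^2/(Q+1)$ your numerator factors as $f=l^2\bigl((1+l)/(P+\sqrt{\lambda})-(\sqrt{\lambda}-l)/(Q+1)\bigr)$ — I checked this identity by direct expansion, and it does hold — so the sign question reduces to $(1+l)(Q+1)>(\sqrt{\lambda}-l)(P+\sqrt{\lambda})$, which for $\sqrt{\lambda}\le 1$ follows from the factorwise bounds $\sqrt{\lambda}-l\le 1-l<1+l$ and $P+\sqrt{\lambda}\le Q+1$ (and is trivial when $\sqrt{\lambda}-l\le 0$). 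Your argument is purely algebraic, avoids the derivative computation and the case split's monotonicity bookkeeping, handles $\lambda=1$ within the same frame consistently with \eqref{eqn:F1 lambda=1}, and yields strict negativity directly; what the paper's derivative argument buys in exchange is the slightly stronger structural fact that $N$ is strictly decreasing in $l$ on $(0,\sqrt{\lambda})$, but for the stated corollary your comparison of products is shorter and cleaner. One cosmetic slip: in the small-$l$ asymptote the leading $l^2$-coefficient is proportional to $\lambda-1=(\sqrt{\lambda}-1)(\sqrt{\lambda}+1)$ (compare \eqref{eqn:F_2 asymp 0}), not to $\sqrt{\lambda}-1$ as written, though the two agree in sign, which is all your existence-of-$L$ argument requires.
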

\subsection{The detailed calculations for the two nutrient regimes}
\label{sec:TW The detailed calculations for the two nutrient regimes}
In this subsection we work out the expression of $\delta^{-1}\frac{d \delta}{dt}$ in Theorem \ref{thm:main thm for TW} for the two nutrient models.

For the \emph{in vivo} case. Plugging the expansion \eqref{eqn:TW expansion of c1} of $c_1(\xi,y)$ into \eqref{eqn:TW invivo c1 eqns}, together with the conditions \eqref{eqn:TW c_1 -infty} and \eqref{eqn:TW c_1 +infty}, for any $k\in\mathbb{N}^+$ we have:
\begin{subequations}
\begin{alignat}{2}
    c^{\text{(i)},k}_1(\xi)&=a_1^k e^{\sqrt{\lambda+k^2}\xi}\qquad&\text{for}\quad\xi\leq 0,\\
    c^{\text{(o)},k}_1(\xi)&=b_1^k e^{-\sqrt{1+k^2}\xi}\qquad&\text{for}\quad\xi\geq 0.
\end{alignat}
\end{subequations}
Recall that the leading order terms $c_0^{\text{(i)}}(\xi)$ and $c_0^{\text{(o)}}(\xi)$ are given by \eqref{eqn:TW nutrient steady soln invivo}. Then \eqref{eqn:TW invivo cts at tumor boundary a}-\eqref{eqn:TW invivo cts at tumor boundary c} yield $a_1^k=b_1^k=0$ for any $k\neq 1$, for $k=1$ we get nontrivial solution:
\begin{equation}
    c^{\text{(i)},1}_1(\xi)=c^{\text{(o)},1}_1(\xi)=-\frac{\sqrt{\lambda}c_B}{\sqrt{\lambda+l^2}+\sqrt{1+l^2}}e^{\sqrt{\lambda+l^2}\xi}.
\end{equation}
By now, $c_1^{\text{(i)},k}(\xi)$ and $c_1^{\text{(o)},k}(\xi)$ are determined for any $k$. Therefore, $c_1(\xi,y)$ is determined. Then by solving equation \eqref{eqn:TW p1 equation} together with boundary conditions \eqref{eqn:TW p1 boundary conditions} (with $p_0$ given by \eqref{eqn:TW pressure steady soln invivo}), we get $p_1^k(\xi)=0$ for any $k\neq 1$, and:
\begin{subequations}
\begin{equation}
      p_1^1(\xi)=Ae^{l \xi}-\frac{G_0}{\lambda}c^{\text{(i)},1}_1(\xi)=Ae^{l \xi}+\frac{G_0 c_B}{\sqrt{\lambda}(\sqrt{\lambda+l^2}+\sqrt{1+l^2})}e^{\sqrt{\lambda+l^2}\xi},
\end{equation}
with $A$ given by:
\begin{equation}
 A=\frac{G_0 c_B}{\sqrt{\lambda}}\left(\frac{1}{\sqrt{\lambda}+1}-\frac{1}{\sqrt{\lambda+l^2}+\sqrt{1+l^2}}\right).  
\end{equation}
\end{subequations}
By using the expression of $p_0(\xi)$ and $p_1^1(\xi)$, \eqref{eqn:change rate of delta} yields that up to an error of $O(\delta)$:
\begin{align}
\label{eqn:TW invivo growing rate of delta}
    \delta^{-1}\frac{d\delta}{dt}
    &=-\left(\partial^2_{\xi}p_0(0)+\partial_{\xi}p_1^1(0)\right)\\\nonumber
    &=\frac{G_0 c_B}{\sqrt{\lambda}}\left(\frac{\sqrt{\lambda}-l}{\sqrt{\lambda}+1}+\frac{l-\sqrt{\lambda+l^2}}{\sqrt{\lambda+l^2}+\sqrt{1+l^2}}\right)=F_2(\lambda,l).\nonumber
\end{align}
For the \emph{in vitro} model. Plugging the expansion of $c_1(\xi,y)$ \eqref{eqn:TW expansion of c1} into \eqref{eqn:TW c1 eqns invitro}, together with the conditions \eqref{eqn:TW invitro -infty c1}, for any $k\in\mathbb{N}^+$ we have:
\begin{equation}
    c^{k}_1(\xi)=a_1^k e^{\sqrt{\lambda+k^2}\xi}\qquad\text{for}\quad\xi\leq 0.
\end{equation}
And the leading order term $c_0(\xi)$ for this case is given by \eqref{eqn:TW nutrient steady soln invitro}. Then by using boundary condition \eqref{eqn:TW invitro boundary condtion}, we get $c^k_1(\xi)=0$ for any $k\neq 1$, and for $k=1$:
\begin{equation}
    c^1_1(\xi)=-c_B\sqrt{\lambda}e^{\sqrt{\lambda+l^2}\xi}.
\end{equation}
Then similar to the previous case, by solving equation \eqref{eqn:TW p1 equation} together with boundary conditions \eqref{eqn:TW p1 boundary conditions} (with $p_0$ given by \eqref{eqn:TW pressure steady soln invitro}), we get $p_1^k(\xi)=0$ for any $k\neq 1$. And for $k=1$:
\begin{equation}
     p^1_1(\xi)=\frac{G_0 c_B}{\sqrt{\lambda}}e^{\sqrt{\lambda+l^2}\xi}.
\end{equation}
Finally, by using the expression of $p_0$ and $p_1^1$, \eqref{eqn:change rate of delta} yields up to an error of $O(\delta)$:
\begin{align}
\label{eqn:TW invitro growing rate of delta}
    \delta^{-1}\frac{d\delta}{dt}
    &= -\left(\partial^2_{\xi}p_0(0)+\partial_{\xi}p_1^1(0)\right)\\\nonumber
    &=\frac{G_0 c_B}{\sqrt{\lambda}}(\sqrt{\lambda}-\sqrt{\lambda+l^2})=F_1(\lambda,l).
\end{align}
By now we complete the proof of Theorem \ref{thm:main thm for TW}.
\subsection{Boundary stability analysis for the two nutrient models}
\label{sec:Boundary stability analysis for the two nutrient models TW}
In this subsection, we prove the mathematical properties of $F_1$ and $F_2$ in Corollary \ref{cor:TW}, which further yields the boundary behaviors in Remark \ref{rmk:TW}. 

For the \emph{in vitro} model, one can easily check that $F_1(\lambda,l)\leq 0$ for any frequency $l\geq 0$. Thus, the amplitude of the perturbation decays as time evolves for any single frequency perturbation. Therefore, the proof of the argument for the \emph{in vitro} model in Corollary \ref{cor:TW} is completed.

For the \emph{in vivo} model, $\delta^{-1}\frac{d\delta}{d t}$ is given by \eqref{eqn:TW invivo evolution fcn} and plotted in Figure \ref{fig:TW invivo}. The limit \eqref{eqn:F_2 asymp infty} is obviously for checking. For the asymptote of $l$ approaches $0$, note that
\begin{align}
\label{eqn:rewrite F2}
    F_2(\lambda,l)
    &=\frac{(\sqrt{\lambda}-l)(\sqrt{\lambda+l^2}+\sqrt{1+l^2})+(l-\sqrt{\lambda+l^2})(\sqrt{\lambda}+1)}{\sqrt{\lambda}(\sqrt{\lambda}+1)(\sqrt{\lambda+l^2}+\sqrt{1+l^2})}\\
    &=\frac{N(\lambda,l)}{2\lambda(\sqrt{\lambda}+1)(\sqrt{\lambda+l^2}+\sqrt{1+l^2})}.\nonumber
\end{align}
By using the Taylor expansion $\sqrt{1+x}=1+\frac{x}{2}+O(x^2)$, when $l$ approaches to $0$ we have
\begin{equation}
\label{eqn:asymp N}
    N(\lambda,l)=\frac{\lambda-1}{2\sqrt{\lambda}}l^2+O(l^3).
\end{equation}
Therefore, if $\lambda>1$ then $F_2(\lambda,l)>0$ for $l$ close to zero, and $F_2(\lambda,l)<0$ for $l$ sufficiently large. Thus, by the intermediate value theorem and the continuity of $F_2(\lambda,l)$ in $l$, there exists $L>0$ such that $F_2(\lambda,l)>0$ for $l\in(0,L)$.

Now, we prove $F_2(\lambda,l)<0$ for any $0<\lambda\leq 1$ and $l>0$. From the expression in \eqref{eqn:TW invivo evolution fcn}, $F_2(\lambda,l)<0$ hold obviously for $l\geq\sqrt{\lambda}$. On the other hand, the denominator in \eqref{eqn:rewrite F2} is always positive. Therefore, it is sufficient for us to show the numerator $N(\lambda,l)$ is negative for $0<l<\sqrt{\lambda}\leq 1$. Indeed, by taking the derivative of $N(\lambda,l)$ with respect to $l$, we get
\begin{align}
    \frac{\partial N(\lambda,l)}{\partial l}
    &=-(\sqrt{\lambda+l^2}+\sqrt{1+l^2})+(\sqrt{\lambda}+1)+\frac{-l(l+1)}{\sqrt{\lambda+l^2}}+\frac{l(\sqrt{\lambda}-l)}{\sqrt{1+l^2}}\\\nonumber
    &\leq-\frac{l(l+1)}{\sqrt{\lambda+l^2}}+\frac{l(\sqrt{\lambda}-l)}{\sqrt{1+l^2}}\\\nonumber
    &\leq\frac{l(\sqrt{\lambda}-1-2l)}{\sqrt{\lambda+l^2}}<0.\nonumber
\end{align}
for $0<l<\sqrt{\lambda}\leq 1$. Finally, combine with the fact $N(\lambda,0)=0$, we can conclude $N(\lambda,l)< 0$ for $0< l<\sqrt{\lambda}\leq 1$. By now, we complete the proof of Corollary \ref{cor:TW}.
\section{Stability of radially symmetric boundary in the two nutrient models}
\label{sec:radial boundary}
In this section, we study the boundary stability of the radially symmetric solution under the two nutrient regimes. The structure of this section is arranged as follow. We establish the setups and main conclusions  in Section \ref{sec:Set up for the radial boundary perturbation}.  After that we carry out the calculation of $\delta^{-1}\frac{d\delta}{d t}$ for the two nutrient models in Section \ref{sec:Radial The detailed calculations for the two nutrient regimes}, which serves as the proof of Theorem \ref{thm:main thm for Radial}. Then, we proof the mathematical properties of the perturbation evolution functions summarized in Corollary \ref{cor:Radial} in Section \ref{sec:Radial Boundary stability analysis for the two nutrient models}. Finally, we discuss the relationship between the perturbation of the radial boundary and the traveling wave boundary in Section \ref{sec:Relationship between the Radial boundary and the traveling wave boundary}.

\subsection{Setup and main results}
\label{sec:Set up for the radial boundary perturbation}
For the radial case, we employ the polar coordinate system $(r,\theta)$. Before the perturbation, the tumor boundary is defined by \eqref{eqn:traveling wave boundary} with the level set index $Z(t)=R(t)$, and the tumor region corresponds to the disk with radius $R(t)$, $D(t)=\left\{(r,\theta)\vert r\leq R(t)\right\}$. Following the framework of Section \ref{sec:Framework of perturbation analysis}, we consider the perturbation by a single wave mode, i.e. $\mathcal{P}_l(\theta)=\cos{l\theta}$ with $\theta\in[-\pi, \pi)$. Then, the perturbed boundary \eqref{eqn:boundary perturbation} writes:
    \begin{equation}
    \label{eqn:Radial boundary perturbation}
        \Tilde{\mathcal{B}}_t(\theta)=\left\{(r,\theta)\vert r=R(t)+\delta(t)\cos{l\theta}, \theta\in[-\pi, \pi)\right\}.
    \end{equation}
Then the perturbed solutions (drop the tilde) $c$ and $p$ solves \eqref{eqn:perturbed equations in general}, with the perturbed tumor region $\Tilde{D}(t)$ enclosed by $\Tilde{\mathcal{B}}_t(\theta)$. Further more, $c$ and $p$ have the asymptotic expansions:
   \begin{subequations}
    \label{eqn:Radial delta asymtotic expansion}
    \begin{alignat}{2}
    \label{eqn:Radial delta asymtotic expansion c1}
    c(r,\theta,t)&=c_0(r,t)+\delta(t) c_1(r,\theta,t)+O(\delta^2),\\
    \label{eqn:Radial delta asymtotic expansion p1}
    p(r,\theta,t)&=p_0(r,t)+\delta(t) p_1(r,\theta,t)+O(\delta^2),
    \end{alignat}
    where the leading order terms $c_0$ and $p_0$ correspond to the unperturbed solutions for the respective nutrient model solved in Section \ref{sec:2D radially symmetric model for the in vitro model} and Section \ref{sec:2D radially symmetric model for the in vivo model}. And the first order terms $c_1(r,\theta,t)$ and $p_1(r,\theta,t)$ can be further expanded as
    \begin{alignat}{2}
    \label{eqn:Radial expansion of c1}
    c_1(r,\theta,t)=\Sigma_{k=1}^{\infty} c_1^k(r,t)\cos{kl\theta},\\
    \label{eqn:Radial expansion of p1}
    p_1(r,\theta,t)=\Sigma_{k=1}^{\infty} p_1^k(r,t)\cos{kl\theta}.
    \end{alignat}
    where $l$ as the perturbation wave number is a positive integer.
    \end{subequations}
And from the calculation we will verify that $c_1^k(r,t)=p_1^k(r,t)=0$ for any $k\neq 1$.

For the \emph{in vivo} model, same as before we use the superscript $\text{(i)}$ or $\text{(o)}$ to denote the solution inside or outside the tumor region $\Tilde{D}(t)$. Then according to \eqref{eqn:general soln for first order terms I} and \eqref{eqn:general soln for first order terms III} we have
\begin{subequations}
\label{eqn:Radial invivo c1 eqns}
\begin{alignat}{2}
-\lap c_1^{\text{(i)}}(r,\theta,t)+\lambda c_1^{\text{(i)}}(r,\theta,t)=0,\\
-\lap c_1^{\text{(o)}}(r,\theta,t)+c_1^{\text{(o)}}(r,\theta,t)=0.
\end{alignat}
and the Laplacian operator writes: $\lap=\frac{\partial^2}{\partial r^2}+\frac{1}{r}\frac{\partial}{\partial r}+\frac{1}{r^2}\frac{\partial^2}{\partial\theta^2}$.
\end{subequations}
By using the expansion in \eqref{eqn:nutrient expansion} and \eqref{eqn:nutrient derivative expansion}, the series form of $c_1(\xi,y)$ in $\eqref{eqn:Radial expansion of c1}$, the boundary condition \eqref{eqn:continuity of c at perturbed boundary} yields
\begin{subequations}
\begin{alignat}{2}
\label{eqn:Radial invivo cts at tumor boundary a}
    c_1^{\text{(i)},k}(R(t),t)&=c_1^{\text{(o)},k}(R(t),t),\quad\forall k\in\mathbb{N}^+,\\
\label{eqn:Radial invivo cts at tumor boundary b}
    \partial_{r}c_1^{\text{(i)},k}(R(t),t)&=\partial_{r}c_1^{\text{(o)},k}(R(t),t),\quad\forall k\geq 2,\\
\label{eqn:Radial invivo cts at tumor boundary c}
    \partial^2_{r}c_0^{\text{(i)}}(R(t),t)+\partial_{r}c_1^{\text{(i)},1}(R(t),t)&=\partial^2_{r}c_0^{\text{(o)}}(R(t),t)+\partial_{r}c_1^{\text{(o)},1}(R(t),t).
\end{alignat}
In addition, the assumptions $c(0,\theta)<+\infty$ and $c(+\infty,\theta)=c_B$ for any $\theta\in[-\pi, \pi)$ yield that
\begin{alignat}{2}
\label{eqn:Radial c_1 -infty}
    c_1^{\text{(i)},k}(0,t)<+\infty,\\
\label{eqn:Radial c_1 +infty}
    c_1^{\text{(o)},k}(+\infty,t)=0,
\end{alignat}
for any $k\in\mathbb{N}^+$.
\end{subequations}

For the \emph{in vitro} model, $c$ presents the nutrient solution inside the tumor and equation \eqref{eqn:general soln for first order terms I} writes
\begin{equation}
\label{eqn:Radial c1 eqns invitro}
    -\lap c_1(r,\theta,t)+\lambda c_1(r,\theta,t)=0,\quad\text{in}\quad \Tilde{D}(t).
\end{equation}
And by using \eqref{eqn:nutrient expansion} and the series expansion of $c_1$ in \eqref{eqn:Radial expansion of c1}, the boundary condition \eqref{eqn: perturbed invitro bc} yields:
\begin{subequations}
\label{eqn:Radial invitro boundary condtion}
\begin{alignat}{2}
\partial_{\zeta}c_0(R(t),t)+c_1^1(R(t),t)=0.\\
c_1^{k}(R(t),t)=0,\quad\forall k\geq 2.
\end{alignat}
Similar to the \emph{in vivo} model, the boundary condition \eqref{eqn:Radial c_1 -infty} remains ture (drop the superscript
$\text{(i)}$).
\end{subequations}

Once $c_1(r,\theta,t)$ is determined by the boundary value problems above, we can further determine $p_1(r,\theta,t)$ for the corresponding model. Under either nutrient regime, $p_1(r,\theta,t)$ satisfies the equation:
\begin{equation}
\label{eqn:Radial p1 equation}
    -\lap p_1(r,\theta,t)=G_0 c_1(r,\theta,t),\quad\text{in}\quad\Tilde{D}(t).
\end{equation}
By using the expansion \eqref{eqn:pressure expansion}, the series form of $p_1$ in \eqref{eqn:Radial expansion of p1}, the boundary condition \eqref{eqn:general perturbed pressure bc} yields
\begin{subequations}
\label{eqn:Radial p1 boundary conditions}
\begin{alignat}{2}
\partial_{r}p_0(R(t),t)+p_1^1(R(t),t)=0.\\
p_1^{k}(R(t),t)=0,\quad\forall k\geq 2.
\end{alignat}
And by asymmetry we also have $\partial_{r}p(0,\theta,t)=0$, which further provides
\begin{equation}
    \partial_{r}p_1^k(0,t)=0,\qquad\text{for}\quad\forall k\in\mathbb{N}^+.
\end{equation}
\end{subequations}
Once $p_1(r,\theta,t)$ are determined, we can work out the expression of $\delta^{-1}\frac{d\delta}{d t}$ for the two nutrient regimes as in \eqref{eqn:change rate of delta}, which further determines the evolution of the perturbation amplitude. Now we establish the main conclusions, and the detailed calculation will be left to the next subsection. 
\begin{theorem}
\label{thm:main thm for Radial}
Given growing rate $G_0>0$, background concentration $c_B>0$, nutrient consumption rate $\lambda>0$, and perturbation wave number $l\in\mathbb{N}^+$. When the radius of the tumor is around $R$ (the corresponding unperturbed tumor has radius $R$), the evolution function $\delta^{-1}\frac{d\delta}{d t}$ for the \emph{in vitro} model is given by:
\begin{equation}
\label{eqn:Radial invitro evolution fcn}
\delta^{-1}\frac{d\delta}{dt}=\frac{G_0 c_B I_1(\sqrt{\lambda}R)}{I_0(\sqrt{\lambda}R)}\left(
    \frac{I_1'(\sqrt{\lambda}R)}{I_1(\sqrt{\lambda}R)}-\frac{I_l'(\sqrt{\lambda}R)}{I_l(\sqrt{\lambda}R)}\right)+O(\delta)\defeq F_3(\lambda,l,R)+O(\delta).
\end{equation}
For the \emph{in vivo} model, $\delta^{-1}\frac{d\delta}{d t}$ is given by:
\begin{align}
\label{eqn:Radial invivo growing rate of delta}
\delta^{-1}\frac{d\delta}{dt}
&=\frac{G_0 c_B l}{\sqrt{\lambda}R C(R)}\left(\frac{C_1(R)}{C_l(R)}K_l(R)I_l(\sqrt{\lambda}R)-K_1(R)I_1(\sqrt{\lambda}R)\right)\\\nonumber
&\quad-\frac{G_0 c_B}{C(R)}\left(\frac{C_1(R)}{C_l(R)}K_l(R)I_l'(\sqrt{\lambda}R)-K_1(R)I_1'(\sqrt{\lambda}R)\right)+O(\delta)\\\nonumber
&\defeq F_4(\lambda,l,R)+O(\delta)\nonumber
\end{align}
where $C(R)$ is defined in \eqref{eqn:Radial a0 and b0}, and $C_j(R)$ ($j\in\mathbb{N}^+$) is given by
\begin{equation}
\label{eqn:Cj}
C_j(R)=K_j'(R)I_j(\sqrt{\lambda}R)-\sqrt{\lambda}I_j'(\sqrt{\lambda}R)K_j(R).
\end{equation}
\end{theorem}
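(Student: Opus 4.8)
The plan is to reduce everything to the master identity \eqref{eqn:change rate of delta}, which in the radial setting reads $\delta^{-1}\frac{d\delta}{dt}=-\left(\partial_r^2 p_0(R)+\partial_r p_1^1(R)\right)+O(\delta)$, with $p_0$ already given explicitly by \eqref{eqn:Radial pressure steady soln invitro} and \eqref{eqn:Radial pressure steady soln invivo}. Hence the whole task is to produce the first-order pressure coefficient $p_1^1$ and evaluate its radial derivative at $r=R$. I would first dispatch the nutrient perturbation $c_1$ and then feed it into the pressure equation \eqref{eqn:Radial p1 equation}.

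First I would separate variables. Writing $c_1=\sum_k c_1^k(r)\cos kl\theta$ in \eqref{eqn:Radial invivo c1 eqns} (resp. \eqref{eqn:Radial c1 eqns invitro}) turns each radial coefficient into a modified Bessel equation of order $kl$: inside the tumor the argument is $\sqrt\lambda r$ and outside (\emph{in vivo}) it is $r$. Regularity at the origin \eqref{eqn:Radial c_1 -infty} forces $c_1^{\text{(i)},k}=a_1^k I_{kl}(\sqrt\lambda r)$, while decay at infinity \eqref{eqn:Radial c_1 +infty} forces $c_1^{\text{(o)},k}=b_1^k K_{kl}(r)$. For $k\ge 2$ the matching is homogeneous — a single condition $c_1^k(R)=0$ in the \emph{in vitro} case, forcing $a_1^k=0$ since $I_{kl}(\sqrt\lambda R)\neq0$; and a $2\times2$ system from \eqref{eqn:Radial invivo cts at tumor boundary a}–\eqref{eqn:Radial invivo cts at tumor boundary b} in the \emph{in vivo} case whose determinant is $-C_{kl}(R)\neq0$ — so that $c_1^k\equiv0$ in both. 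By the same Euler-equation argument applied to \eqref{eqn:Radial p1 equation}, whose regular homogeneous solutions are $r^{\pm kl}$, one gets $p_1^k\equiv0$ for $k\ge2$, which justifies keeping only $k=1$.

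The heart of the computation is the $k=1$ mode. For the \emph{in vitro} model, \eqref{eqn:Radial invitro boundary condtion} directly yields $a_1^1$ from $\partial_r c_0(R)$; I would then solve \eqref{eqn:Radial p1 equation} using the key fact that $I_l(\sqrt\lambda r)$ already satisfies the modified Bessel equation, so $-\frac{G_0 a_1^1}{\lambda}I_l(\sqrt\lambda r)$ is a particular solution and $r^l$ the regular homogeneous one; imposing \eqref{eqn:Radial p1 boundary conditions} happens to annihilate the $r^l$ coefficient, and substituting into the master identity collapses to $F_3$ after using $I_0'=I_1$. For the \emph{in vivo} model the $k=1$ matching \eqref{eqn:Radial invivo cts at tumor boundary c} is inhomogeneous: its right-hand side is the jump $\Delta=\partial_r^2c_0^{\text{(o)}}(R)-\partial_r^2c_0^{\text{(i)}}(R)$, which I would evaluate by using the unperturbed equations \eqref{eqn:radial in vivo} to trade second derivatives for zeroth-order terms, giving $\Delta=(1-\lambda)c_0(R)-c_B$ once the $\frac1R\partial_r c_0$ contributions cancel by continuity of $c_0$ and $\partial_r c_0$ at $R$. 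Solving the resulting $2\times2$ system by Cramer's rule produces the determinant $-C_l(R)$ and hence $a_1^1=-K_l(R)\Delta/C_l(R)$.

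The main obstacle — and the step that makes $F_4$ take its stated form — is recognizing that $\Delta$ is proportional to $C_1(R)$. Expanding $\Delta$ through $a_0(R)=K_1(R)/C(R)$ and applying the recurrences $K_1'=-K_0-\tfrac1R K_1$ and $I_1'(\sqrt\lambda R)=I_0(\sqrt\lambda R)-\tfrac1{\sqrt\lambda R}I_1(\sqrt\lambda R)$ shows that $\sqrt\lambda K_1(R)I_0(\sqrt\lambda R)+K_0(R)I_1(\sqrt\lambda R)=-C_1(R)$, so that $a_1^1=-c_B\sqrt\lambda\,C_1(R)K_l(R)/\left(C(R)C_l(R)\right)$ and the ratio $C_1(R)/C_l(R)$ emerges. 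Unlike the \emph{in vitro} case, here the regular homogeneous coefficient $B$ of $r^l$ in $p_1^1$ does not vanish; writing $Bl R^{l-1}=\frac lR\,BR^l$ and eliminating $BR^l$ through \eqref{eqn:Radial p1 boundary conditions} is precisely what generates the $\frac{l}{\sqrt\lambda R\,C(R)}$ prefactor in \eqref{eqn:Radial invivo growing rate of delta}. Finally I would substitute $\partial_r^2 p_0(R)$, $\partial_r p_1^1(R)$, $a_0$, and $a_1^1$ into the master identity and group the terms into the two brackets of $F_4$; this last bookkeeping, together with the Bessel-recurrence simplification of $\Delta$, is where essentially all the difficulty lies, the rest being routine.
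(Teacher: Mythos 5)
Your proposal is correct and follows essentially the same route as the paper's Section \ref{sec:Radial The detailed calculations for the two nutrient regimes}: Fourier--Bessel separation with regularity at the origin and decay at infinity, annihilation of all modes $k\ge 2$ by the homogeneous matching conditions (determinant $-C_{kl}(R)\neq 0$, resp.\ $I_{kl}(\sqrt{\lambda}R)\neq 0$), Cramer's rule for the $k=1$ mode, the particular solution $-\tfrac{G_0}{\lambda}c_1^1$ plus the regular homogeneous part $r^l$ for the pressure (with the $r^l$ coefficient indeed vanishing \emph{in vitro} but not \emph{in vivo}), and evaluation of $-\left(\partial_r^2 p_0(R)+\partial_r p_1^1(R)\right)$. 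The only cosmetic deviation is your evaluation of the jump $\Delta$ through the unperturbed equations as $(1-\lambda)c_0(R)-c_B$ together with the explicit recurrence identity $C_1(R)=-\left(\sqrt{\lambda}K_1(R)I_0(\sqrt{\lambda}R)+K_0(R)I_1(\sqrt{\lambda}R)\right)$, whereas the paper writes the same inhomogeneity directly as $-c_B\left(a_0\lambda I_1'(\sqrt{\lambda}R)+b_0 K_1'(R)\right)$ and lets the $C_1(R)/C_l(R)$ ratio emerge in the final bookkeeping; both yield the identical $a_1^1$ and hence the stated $F_3$ and $F_4$.
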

Since the results are presented in terms of the Bessel functions, we review the basic properties of them in Appendix \ref{sec:Properties of Bessel functions}.
Also note that in either nutrient regime, the value of $G_0,c_B>0$ serve as scaling parameters, therefore do not influence the quantitative behavior of $\delta^{-1}\frac{d\delta}{dt}$. For the \emph{in vitro} model, we will show that $F_3(\lambda,l,R)$ is always negative. For the \emph{in vivo} model, fix the value of $G_0,c_B>0$, $F_4(\lambda,l,R)$ is plotted in Figure \ref{fig:Radial invivo} for different choice of $\lambda$ and perturbation wave number $l$.
\begin{figure}
  \begin{center}
    \includegraphics[width=0.48\textwidth]{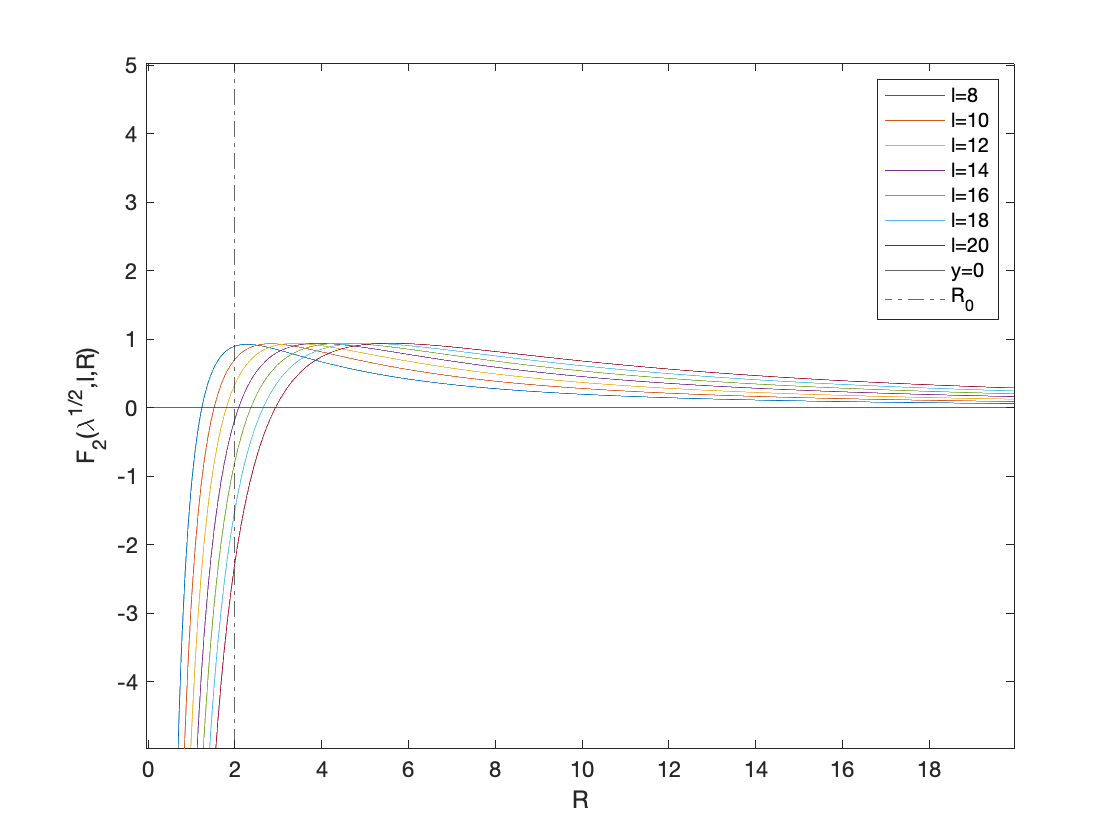}
    \includegraphics[width=0.48\textwidth]{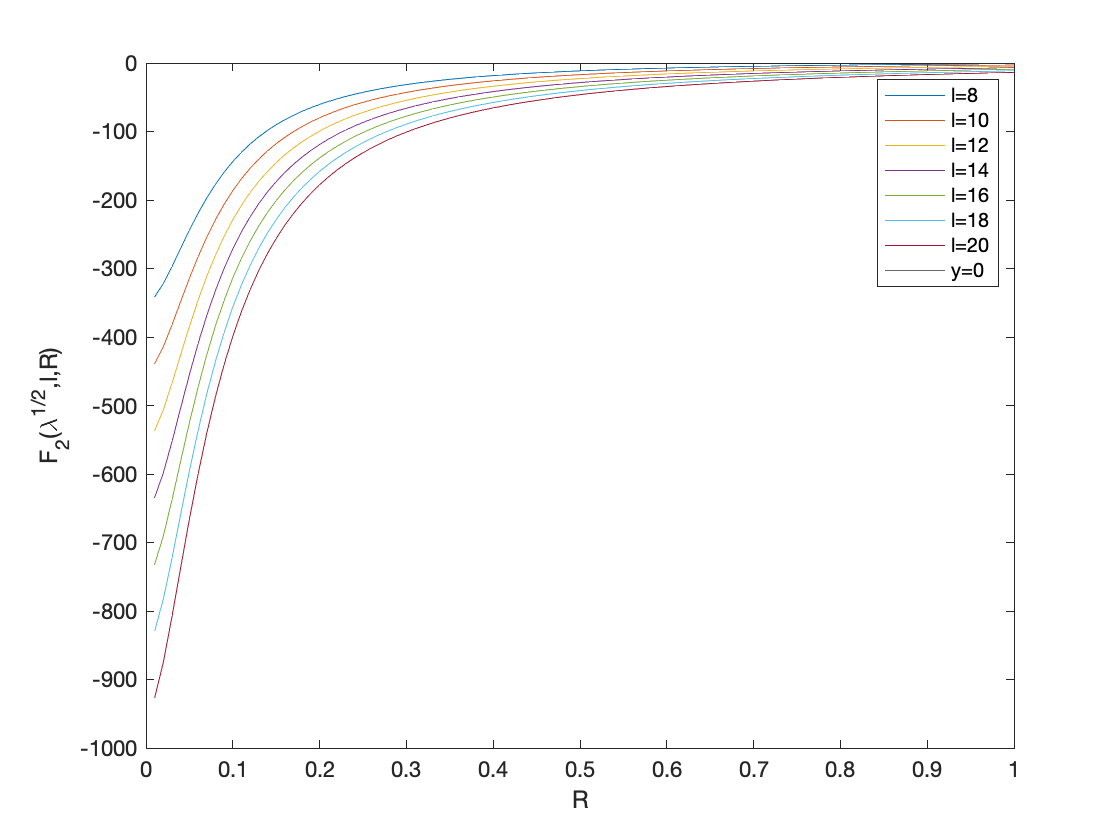}
     \includegraphics[width=0.48\textwidth]{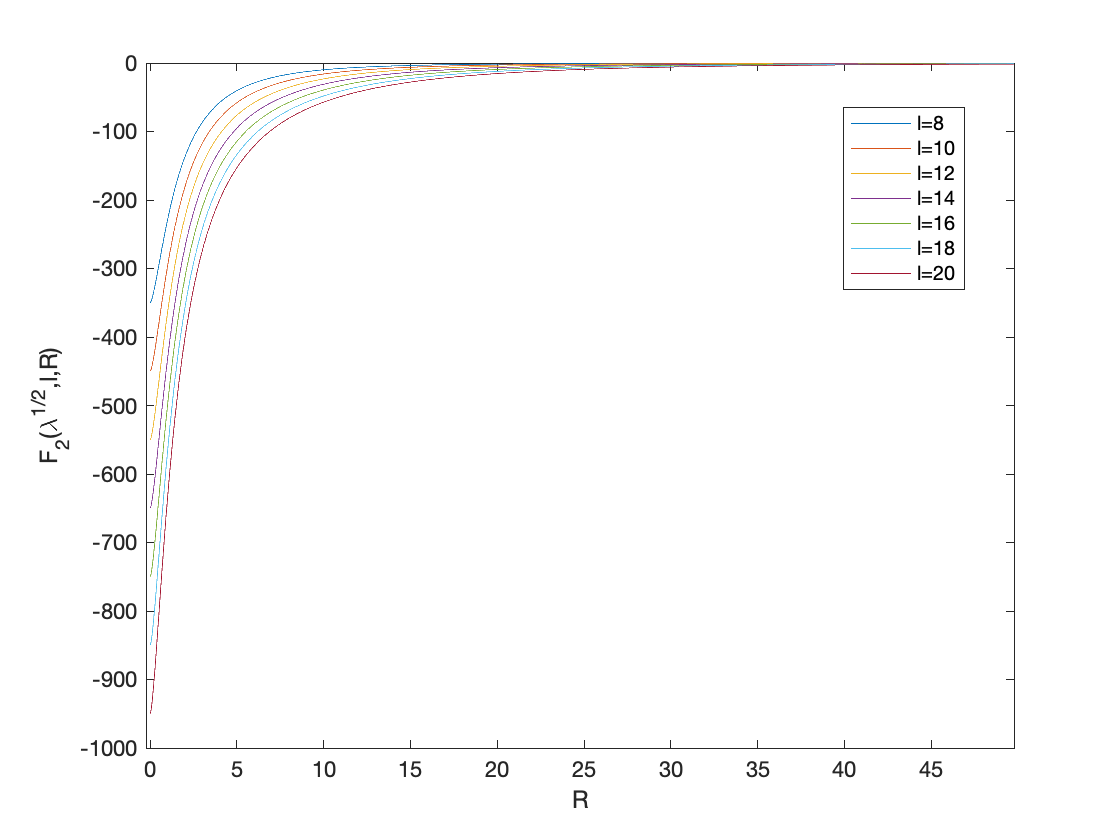}
     \includegraphics[width=0.48\textwidth]{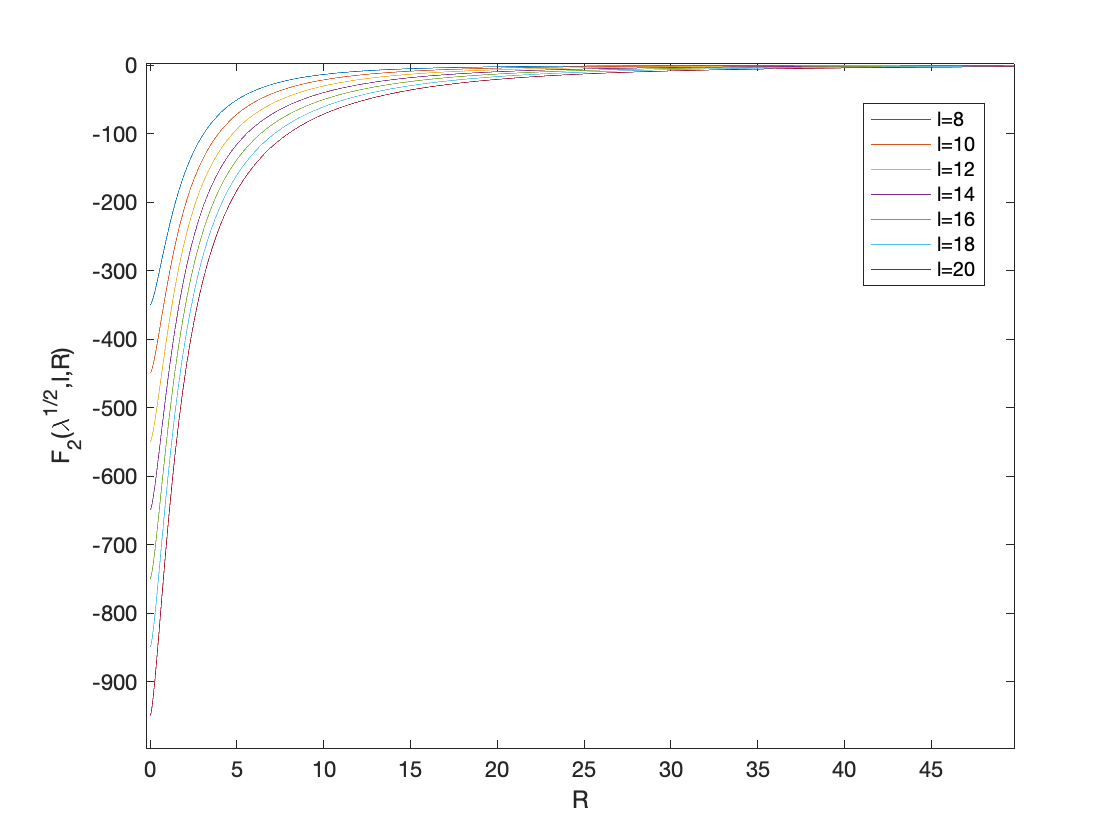}
     \caption{Graphs of $F_4$ with $G_0=1$, $c_B=100$; top (left): $\lambda=100$ and $R\in[0,20]$; top (right): $\lambda=100$ and $R\in[0,1]$; bottom (left): $\lambda=1$ and $R\in[0,50]$; bottom (right): $\lambda=0.8$ and $R\in[0,50]$.}
    \label{fig:Radial invivo}
  \end{center}
\end{figure}
Base on the expression of $\delta^{-1}\frac{d\delta}{dt}$ for the two nutrient models and the Figure \ref{fig:Radial invivo}, we establish following remarks.
\begin{remark}
$F_3(\lambda,1,R)=F_4(\lambda,1,R)=0$ for any $\lambda,R>0$. Since the mode $1$ perturbation corresponds to a trivial translation instead of the change of boundary geometry.
\end{remark}
\begin{remark}
\label{rmk:5.2}
When $0<\lambda\leq 1$, fix any wave number $l\geq 2$, $F_4(\lambda,l,R)$ is always negative and monotone increases in $R$. Physically, when the nutrient consumption rate is relatively low, the perturbation amplitude continuously decreases to zero, regardless of the perturbation wave number and tumor size. Correspondingly, the tumor always evolves from a star shape to a larger disk.
\end{remark}
\begin{remark}
\label{rmk:5.3}
For the regime $\lambda>1$, we have:
\begin{enumerate}
\item  For any fixed $l\geq 2$, there exists a threshold $R^*(l)$ such that $F_4(\lambda,l,R)<0$ for $0<R<R^*(l)$, and $F_4(\lambda,l,R)>0$ for $R>R^*(l)$ (see the left top picture in Figure \ref{fig:Radial invivo}). That means considering any single wave perturbation, and assume the nutrient consumption rate is significant, the perturbation amplitude will degenerate while the tumor size is relatively small, and the tumor will evolve from a star shape to a larger disk as in the \emph{in vitro} case. However, when the tumor size becomes large enough, the amplitude of the perturbation start to increase, and the tumor therefore remains in a star shape (but with a larger size). 
\item Fix a proper value of $R_0$, there exists $l_0$ such that $F_4(\lambda,l,R_0)>0$ for $l<l_0$ and $F_4(\lambda,l,R_0)<0$ for $l>l_0$ (see the left top picture in Figure \ref{fig:Radial invivo}), which implies that when the tumor size is around $R_0$ the perturbation of lower frequencies is easier to become unstable. See Figure \ref{fig:Radial invivo simulation} for the evolution of tumors under different perturbation wave numbers, where the blue curves correspond to the initial perturbed boundaries, and the red curves present the tumors evolve after a certain time.
\item As the tumor size expands, $R(t)$ exceeds more thresholds $R^*(l)$, therefore the corresponding wave number perturbation become unstable successively. 
\end{enumerate}
\end{remark}

\begin{figure}
  \begin{center}
    \includegraphics[width=0.48\textwidth]{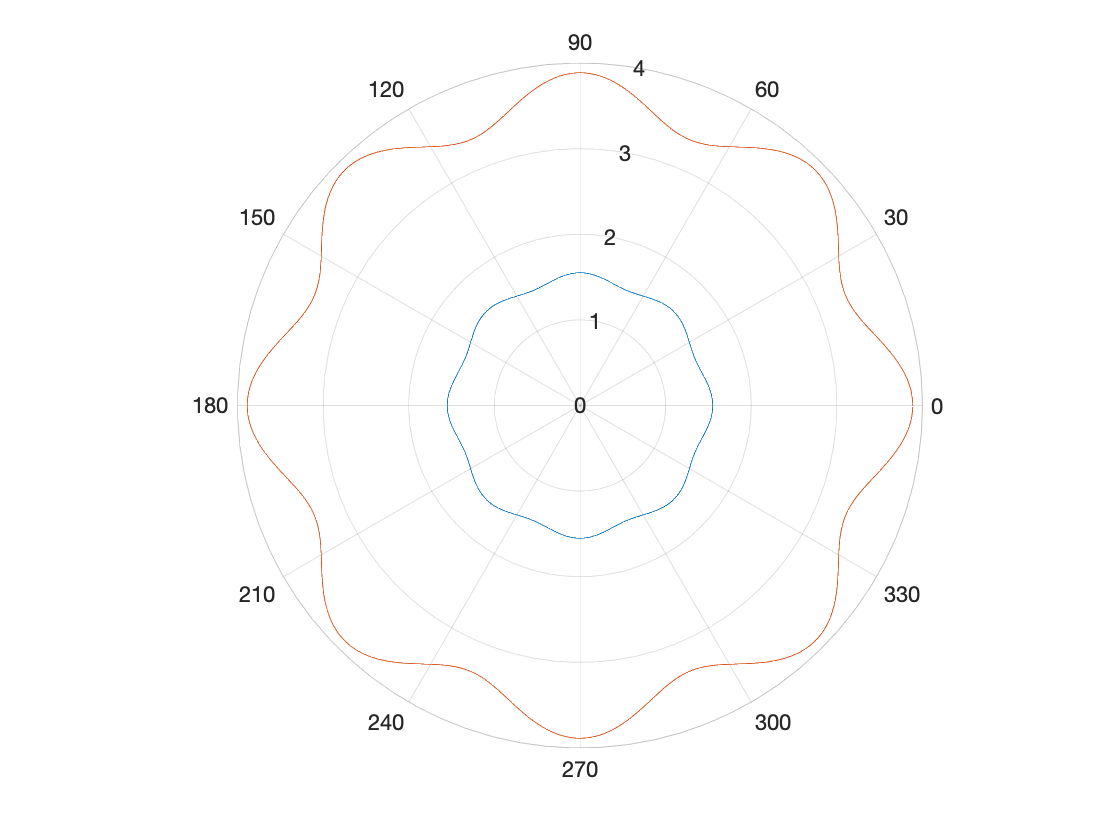}
    \includegraphics[width=0.48\textwidth]{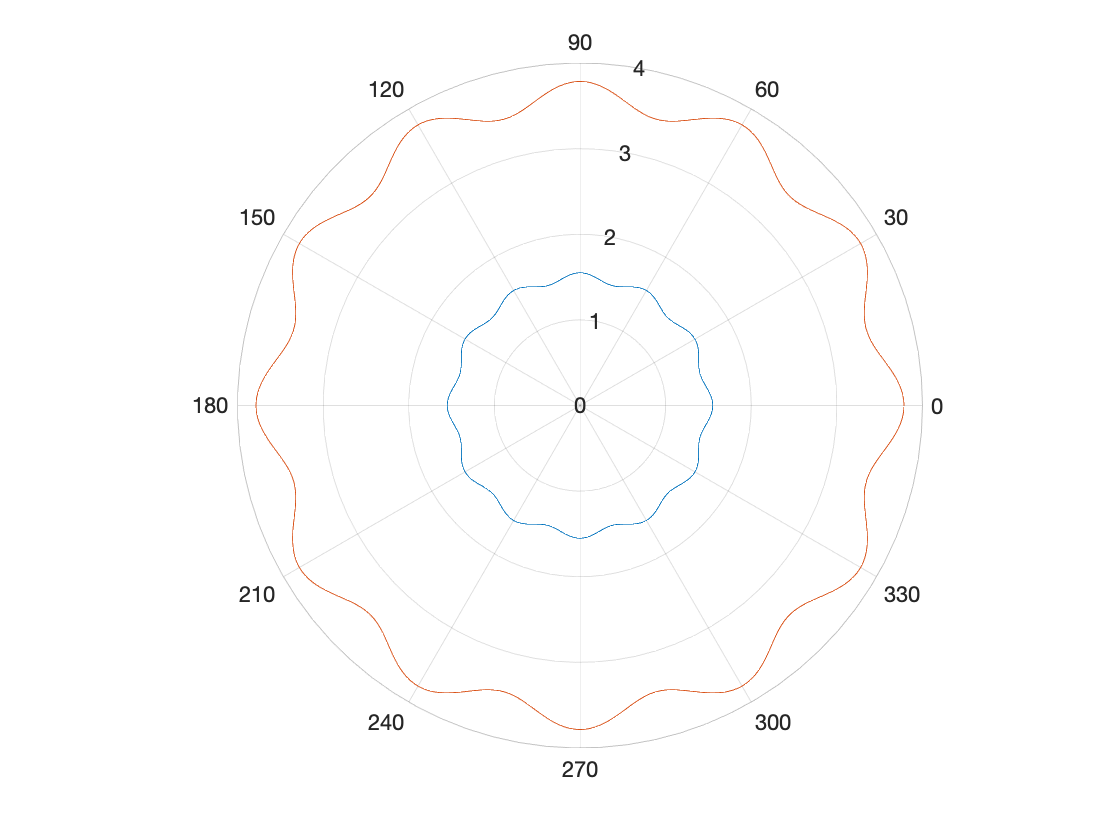}
    \includegraphics[width=0.48\textwidth]{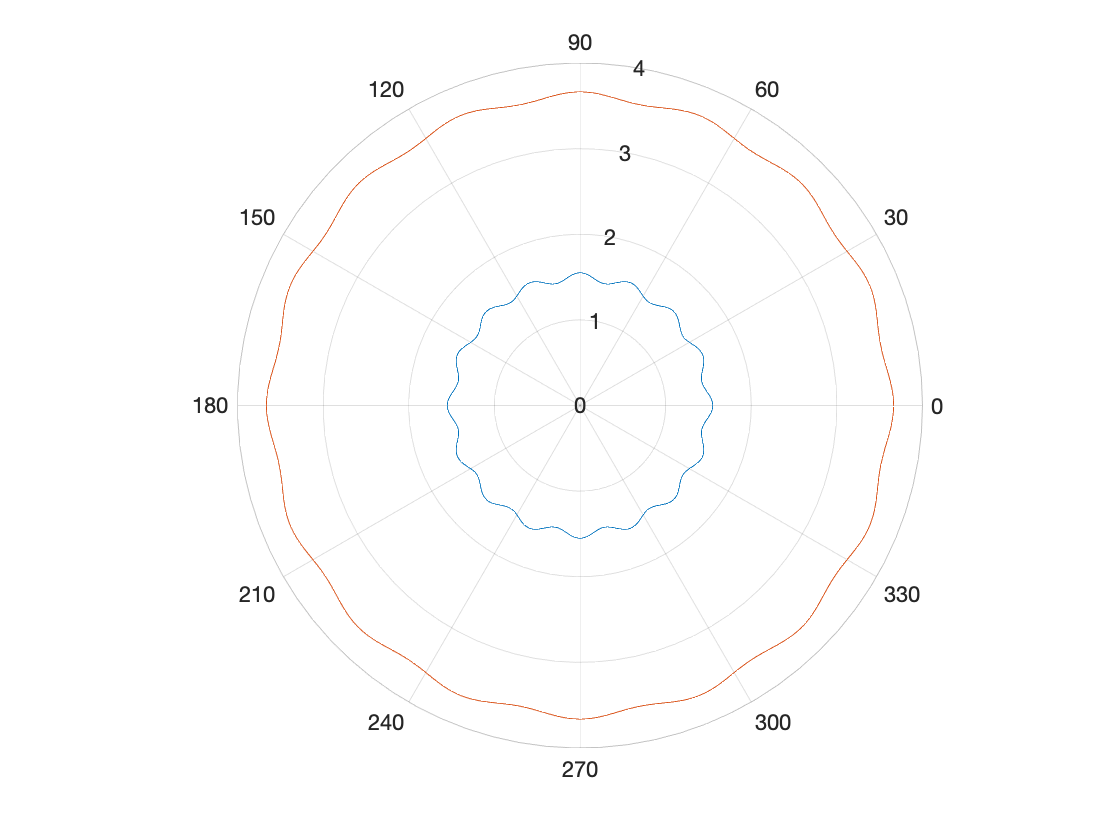}
    \includegraphics[width=0.48\textwidth]{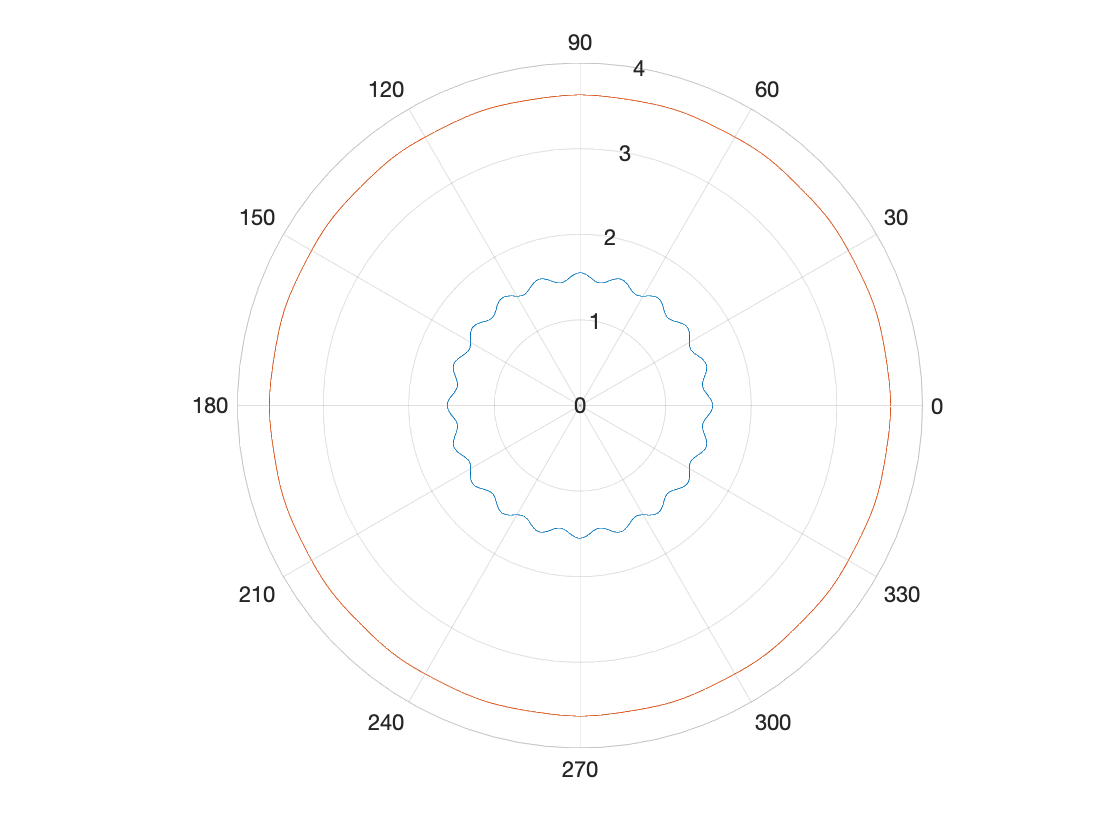}
    \caption{Evolution of tumor boundary for the \emph{in vivo} model with parameters: $G_0=1$, $c_B=100$, $\lambda=100$, $R_0=1.5$, $\delta=0.05$, $T=2$. Wave numbers from left to right and top to bottom: $l=8$, $l=12$, $l=16$, $l=20$.}
    \label{fig:Radial invivo simulation}
  \end{center}
\end{figure}
Some of the results in the above remarks can be proved rigorously, we summarize them in the following Corollary and the proof is left to Section \ref{sec:Radial Boundary stability analysis for the two nutrient models}.
\begin{corollary}
\label{cor:Radial}
Fix $G_0>0$ and $c_B>0$. For any $\lambda>0$ and $l>0$, $F_3(\lambda,l,R)$ is always negative, therefore the perturbation amplitude always decays for the \emph{in vitro} model. 
For the \emph{in vivo} model, we are able to show that for any $\lambda>0$, 
\begin{subequations}
\begin{alignat}{2}
F_4(\lambda,l,R)&\sim G_0 c_B\frac{1-l}{2}\qquad&\text{as} \quad R\sim 0,\\
F_4(\lambda,l,R)&\sim G_0 c_B\frac{5(l^2-1)(\sqrt{\lambda}-1)}{16\lambda R^2(\sqrt{\lambda+1})}\qquad&\text{as} \quad R\sim +\infty. 
\end{alignat}
\end{subequations}
Therefore, when $\lambda>1$ and $l\geq 2$ there exists $R^*(l)$ such that $F_4(\lambda,l,R^*(l))=0$. In addition, for $\lambda=1$ and $l\geq 2$, $F_4(1,l,R)$ can be simplified into a simpler form:
\begin{equation}
    \frac{1}{c_B G_0}F_4(1,l,R)
   =\sum_{j=1}^{l-1}R\Big(K_{j}(R)I_{j+1}(R)-K_{j+1}(R)I_{j+2}(R)\Big)-K_1(R)I_1(R).
\end{equation}
\end{corollary}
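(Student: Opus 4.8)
The plan is to prove the four assertions of Corollary~\ref{cor:Radial} in turn, since each rests on a separate property of the modified Bessel functions reviewed in Appendix~\ref{sec:Properties of Bessel functions}. For the \emph{in vitro} statement I would first observe that the prefactor $G_0 c_B I_1(\sqrt{\lambda}R)/I_0(\sqrt{\lambda}R)$ in \eqref{eqn:Radial invitro evolution fcn} is strictly positive, so the sign of $F_3$ is governed by $I_1'(x)/I_1(x)-I_l'(x)/I_l(x)$ at $x=\sqrt{\lambda}R$, and the whole claim reduces to showing that $\nu\mapsto I_\nu'(x)/I_\nu(x)$ is strictly increasing in $\nu$. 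This I would establish by a Riccati argument: setting $g_\nu=I_\nu'/I_\nu$, the modified Bessel equation gives $g_\nu'+g_\nu^2+g_\nu/x-1-\nu^2/x^2=0$, and differentiating in $\nu$ shows that $h:=\partial_\nu g_\nu$ solves the linear equation $h'+(2g_\nu+x^{-1})h=2\nu/x^2$ with integrating factor $x I_\nu^2$. Integrating from the origin, where the boundary term vanishes since $I_\nu(x)^2/x$ is integrable for $\nu>0$, yields $x I_\nu(x)^2 h(x)=\int_0^x 2\nu\,I_\nu(s)^2 s^{-1}\,ds>0$, hence $h>0$. Thus the bracket in \eqref{eqn:Radial invitro evolution fcn} is negative for $l\geq 2$ and vanishes for the trivial translation mode $l=1$, giving $F_3\leq 0$ with strict inequality for $l\geq 2$.

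For the \emph{in vivo} asymptotics I would substitute the standard small- and large-argument expansions into $C(R)$, $C_1(R)$, $C_l(R)$ (see \eqref{eqn:Radial a0 and b0} and \eqref{eqn:Cj}) and into the products appearing in \eqref{eqn:Radial invivo growing rate of delta}. As $R\to 0$, using $I_\nu(z)\approx (z/2)^\nu/\Gamma(\nu+1)$, $K_0(z)\approx-\ln z$, and $K_\nu(z)\approx \tfrac12\Gamma(\nu)(z/2)^{-\nu}$, the blow-up of each $K_\nu$ is balanced by the decay of the matching $I_\nu$, so every product has a finite limit and collecting leading orders isolates the constant $G_0 c_B(1-l)/2$. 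The genuinely delicate regime is $R\to\infty$: inserting the large-argument expansions, the common factors $e^{(\sqrt{\lambda}-1)R}$ cancel between the numerator combinations and the denominator $C(R)$ (and inside the ratio $C_1/C_l$), reducing $F_4$ to a quotient of power series in $1/R$. Because the leading and first subleading contributions of the mode-$l$ and mode-$1$ terms cancel against one another, one must carry these expansions to order $R^{-2}$ to expose the first surviving term. Extracting this term with its precise coefficient $\tfrac{5}{16}$ and factor $(l^2-1)(\sqrt{\lambda}-1)$ is the main obstacle of the proof.

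With both asymptotics in hand the zero $R^*(l)$ follows immediately: for $\lambda>1$ and $l\geq 2$ the limit $G_0 c_B(1-l)/2$ at $R\to 0$ is negative while the $R\to\infty$ tail is a positive multiple of $R^{-2}$, so continuity of $F_4$ in $R$ and the intermediate value theorem produce a sign change. For the $\lambda=1$ reduction I would use the Wronskian $I_j(x)K_j'(x)-I_j'(x)K_j(x)=-1/x$ together with the identity $I_0(x)K_1(x)+I_1(x)K_0(x)=1/x$; at $\lambda=1$ these collapse $C_j(R)$ to $-1/R$ (so that $C_1/C_l=1$) and $C(R)$ to $1/R$, whereupon \eqref{eqn:Radial invivo growing rate of delta} simplifies to $G_0 c_B\big[\,l(K_lI_l-K_1I_1)-R(K_lI_l'-K_1I_1')\,\big]$. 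Applying the recurrence $z I_\nu'(z)=z I_{\nu+1}(z)+\nu I_\nu(z)$ to each derivative turns these into products of the form $K_\nu I_{\nu+1}$, after which the resulting expression rearranges, via telescoping of the differences $K_jI_{j+1}-K_{j+1}I_{j+2}$, into the closed form stated in the corollary.
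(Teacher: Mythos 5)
Your proposal reproduces the paper's route for three of the four claims and diverges genuinely on the first. For the \emph{in vitro} part, the paper factors $F_3=G_0c_B\frac{I_1(\sqrt{\lambda}R)}{\sqrt{\lambda}R\,I_0(\sqrt{\lambda}R)}H_l(\sqrt{\lambda}R)$ with $H_l(r)=r\bigl(I_1'(r)/I_1(r)-I_l'(r)/I_l(r)\bigr)$, cites equation (2.19) of \cite{friedman2001symmetry} for $H_l'(r)>0$, and concludes $H_l<0$ from $\lim_{r\to\infty}H_l(r)=0$; you instead prove monotonicity of $\nu\mapsto I_\nu'(x)/I_\nu(x)$ in the \emph{order} via the Riccati equation and the integrating factor $xI_\nu^2$. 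Your computation checks out: $(xI_\nu^2h)'=2\nu I_\nu^2/x$, the boundary term vanishes at the origin since $g_\nu\sim\nu/x$ gives $h\sim 1/x$ and $xI_\nu^2h=O(x^{2\nu})$, so $h>0$ and the bracket in \eqref{eqn:Radial invitro evolution fcn} is strictly negative for $l\geq 2$. This is a self-contained replacement for the paper's external citation — a genuine improvement in that respect. The small-$R$ and large-$R$ asymptotics, the intermediate value argument for $R^*(l)$, and the $\lambda=1$ reduction (Wronskian collapse $C(R)=1/R$, $C_j(R)=-1/R$ so $C_1/C_l=1$, then the recurrence $I_\nu'(z)=I_{\nu+1}(z)+(\nu/z)I_\nu(z)$) follow the paper's proof essentially verbatim; your intermediate expression $R\bigl(K_1I_2-K_lI_{l+1}\bigr)-(l-1)K_1I_1$ matches the paper's penultimate line exactly.

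Two caveats. First, for $R\to+\infty$ you describe the strategy but explicitly defer the extraction of the coefficient $\tfrac{5(l^2-1)(\sqrt{\lambda}-1)}{16\lambda R^2(\sqrt{\lambda}+1)}$, calling it the main obstacle — yet this computation \emph{is} the content of the paper's proof of that claim. The paper makes it tractable by the decomposition $F_4=G_0c_B(T_1-T_2)$, where $T_1$ (the non-derivative term, carrying the prefactor $l/(\sqrt{\lambda}R\,C(R))$) turns out to be $O(R^{-3})$, so only $T_2$ contributes at order $R^{-2}$; without identifying this structure and carrying the expansions \eqref{eqn:asymptote of I and K infty}--\eqref{eqn:asymptote of I' and K' infty} through second order, your proof of the large-$R$ asymptote (and hence of the existence of $R^*(l)$, which needs the positive sign of the tail for $\lambda>1$) is incomplete as written. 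Second, a warning on the final step: from your (correct) expression, pure telescoping yields $\sum_{j=1}^{l-1}R\bigl(K_jI_{j+1}-K_{j+1}I_{j+2}\bigr)-(l-1)K_1I_1$, which for $l>2$ does \emph{not} equal the corollary's displayed form with a single $-K_1I_1$ outside the sum (a numerical check at $l=3$, $R=1$ gives $F_4/(G_0c_B)\approx-0.618$ for the $(l-1)$ version, versus $\approx-0.278$ for the displayed one, and the direct $T_1-T_2$ evaluation confirms the former). The displayed closed form in the corollary thus appears to carry a typo, which your claim of "rearranging into the closed form stated" silently inherits; your derivation is right, but the stated target is not what the telescoping produces.
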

Note that to fully prove the observations in Remark \ref{rmk:5.2} and Remark \ref{rmk:5.3}, besides the asymptotes given in Corollary \ref{cor:Radial}, one also need to prove some monotonicity results of $F_4(\lambda,l,R)$ with respect to the variable $R$ or $l$. Unfortunately, we fail to carry out the proof of that even for the special case $\lambda=1$.

\subsection{The detailed calculations for the two nutrient regimes}
\label{sec:Radial The detailed calculations for the two nutrient regimes}
In this subsection we carry out the details of finding the expression of $\delta^{-1}\frac{d \delta}{dt}$ for the two nutrient models, which completes the proof of theorem \ref{thm:main thm for Radial}.

For the \emph{in vivo} case. Plugging the expansion \eqref{eqn:Radial expansion of c1} of $c_1(\xi,y)$ into \eqref{eqn:Radial invivo c1 eqns}, together with the conditions \eqref{eqn:Radial c_1 -infty} and \eqref{eqn:Radial c_1 +infty}, for any $k\in\mathbb{N}^+$ we have:
\begin{subequations}
\begin{alignat}{2}
    c^{\text{(i)},k}_1(r,t)&=c_B a_1^k(t) I_{kl}(\sqrt{\lambda}r)\qquad&\text{for}\quad r\leq R(t),\\
    c^{\text{(o)},k}_1(r,t)&=c_B b_1^k(t) K_{kl}(r)\qquad&\text{for}\quad r\geq R(t).
\end{alignat}
\end{subequations}
Recall that the leading order terms $c_0^{\text{(i)}}(\xi)$ and $c_0^{\text{(o)}}(\xi)$ are given by \eqref{eqn:Radial nutrient steady soln invivo}. Then \eqref{eqn:Radial invivo cts at tumor boundary a}-\eqref{eqn:Radial invivo cts at tumor boundary c} yield $a_1^k=b_1^k=0$ for any $k\neq 1$, since $I_j$ and $K_j$ have the same sign, but $I_j'$ and $K_j'$ have the opposite sign, while for $k=1$, we get nontrivial solutions
\begin{subequations}
\begin{alignat}{2}
    a_1^1(t)&=\frac{\left(a_0(R(t))\lambda I_1'(\sqrt{\lambda}R(t))+b_0(R(t)) K_1'(R(t))\right)K_l(R(t))}{C_l(R(t))},\\
    b_1^1(t)&=\frac{\left(a_0(R(t))\lambda I_1'(\sqrt{\lambda}R(t))+b_0(R(t)) K_1'(R(t))\right)I_l(\sqrt{\lambda}R(t))}{C_l(R(t))}.
\end{alignat}
where $C_l(R(t))$ is defined by \eqref{eqn:Cj}, and $a_0(R(t))$ and $b_0(R(t))$ are given by \eqref{eqn:Radial a0 and b0}. Note that $a_1^1(t)$ and $b_1^1(t)$ are depend on $t$ via the tumor radius $R(t)$, therefore we write $a_1^1(R)$ instead of $a_1^1(t)$ in the following, and similarly for $b_1^1(R)$.
\end{subequations}

By now, $c_1^k(r,t)$ is determined for all $k$, therefore $c_1(r,\theta,t)$ is also determined. Then by solving equation \eqref{eqn:Radial p1 equation} with expansion \eqref{eqn:Radial expansion of p1} and the boundary conditions in \eqref{eqn:Radial p1 boundary conditions} (with $p_0$ given by \eqref{eqn:Radial pressure steady soln invivo}), we get $p_1^k(r,t)=0$ for any $k\neq 1$, and:
\begin{subequations}
\begin{equation}
      p_1^1(r,t)=G_0 c_B \left(B_l(R) r^l-\frac{1}{\lambda}a_1^1(R)I_l(\sqrt{\lambda}r)\right)
\end{equation}
with $B_l(R)$ given by:
\begin{equation}
B_l(R)=\frac{1}{R^l}\left(\frac{a_1^1(R) I_l(\sqrt{\lambda}R)}{\lambda}+\frac{a_0(R) I_1(\sqrt{\lambda}R)}{\sqrt{\lambda}}\right).  
\end{equation}
\end{subequations}
By using the expression of $p_0(r,t)$ and $p_1^1(r,t)$, \eqref{eqn:change rate of delta} yields that up to an error of $O(\delta)$:
\begin{align}
    \delta^{-1}\frac{d\delta}{dt}
    &=-\left(\partial^2_{r}p_0(R,t)+\partial_{r}p_1^1(R,t)\right)\\\nonumber
    &=G_0 c_B\left(a_0(R) I_1'(\sqrt{\lambda}R)-B_{l}(R)l R^{l-1}+\frac{a_1^1(R) I_l'(\sqrt{\lambda}R)}{\sqrt{\lambda}}\right)\\\nonumber
    &=G_0 c_B \frac{l}{\sqrt{\lambda}R\cdot C(R)}\left(\frac{C_1(R)}{C_l(R)}K_l(R)I_l(\sqrt{\lambda}R)-K_1(R)I_1(\sqrt{\lambda}R)\right)\\\nonumber
    &\quad-G_0 c_B\frac{1}{C(R)}\left(\frac{C_1(R)}{C_l(R)}K_l(R)I_l'(\sqrt{\lambda}R)-K_1(R)I_1'(\sqrt{\lambda}R)\right)\\
    &\defeq F_4(\lambda,l,R).\nonumber
\end{align}
For the \emph{in vitro} model. Plugging the expansion of $c_1(r,\theta,t)$ \eqref{eqn:Radial expansion of c1} into \eqref{eqn:Radial c1 eqns invitro}, together with the conditions \eqref{eqn:Radial c_1 -infty}, for any $k\in\mathbb{N}^+$ we have:
\begin{equation}
    c^{k}_1(r,t)=c_B a_1^k(t) I_{kl}(\sqrt{\lambda}r)\qquad\text{for}\quad r\leq R(t).
\end{equation}
And the leading order term $c_0(r,t)$ for this case is given by \eqref{eqn:Radial nutrient steady soln invitro}. Then by using boundary condition \eqref{eqn:Radial invitro boundary condtion}, we get $a^k_1(t)=0$ for any $k\neq 1$, since $I_j$ is always positive. While for $k=1$:
\begin{equation}
    a^1_1(R(t))=-\frac{c_B \sqrt{\lambda} I_1(\sqrt{\lambda}R)}{I_0(\sqrt{\lambda}R) I_l(\sqrt{\lambda}R)}.
\end{equation}
Then similar to the previous case, by solving equation \eqref{eqn:Radial p1 equation} together with boundary conditions \eqref{eqn:Radial p1 boundary conditions} (with $p_0$ given by \eqref{eqn:Radial pressure steady soln invitro}), we get $p_1^k(r,t)=0$ for any $k\neq 1$. And for $k=1$:
\begin{equation}
     p^1_1(r,t)=\frac{G_0 c_B I_1(\sqrt{\lambda}R)}{\sqrt{\lambda}I_0(\sqrt{\lambda}R) I_l(\sqrt{\lambda}R)}I_l(\sqrt{\lambda}r).
\end{equation}
Finally, by using the expression of $p_0$ and $p_1^1$, \eqref{eqn:change rate of delta} yields up to an error of $O(\delta)$:
\begin{align}
\label{eqn:Radial invitro growing rate of delta}
    \delta^{-1}\frac{d\delta}{dt}
    &= -\left(\partial^2_{r}p_0(R,t)+\partial_{r}p_1^1(R,t)\right)\\\nonumber
    &=\frac{G_0 c_B I_1(\sqrt{\lambda}R)}{I_0(\sqrt{\lambda}R)}\left(
    \frac{I_1'(\sqrt{\lambda}R)}{I_1(\sqrt{\lambda}R)}-\frac{I_l'(\sqrt{\lambda}R)}{I_l(\sqrt{\lambda}R)}\right)=F_3(\lambda,l,R).
\end{align}
\subsection{Boundary stability analysis for the two nutrient models}
\label{sec:Radial Boundary stability analysis for the two nutrient models}
In this subsection, we prove the mathematical properties of $F_3$ and $F_4$ summarized in Corollary \ref{cor:Radial} by using the properties for Bessel functions in the Appendix \ref{sec:Properties of Bessel functions}.

For the \emph{in vitro} model, $\delta^{-1}\frac{d\delta}{dt}$ is given by \eqref{eqn:Radial invitro evolution fcn}, which is negative for any $l\in\mathbb{N}^+$ and $c_B,G_0,\lambda,R>0$. Indeed, observe that \eqref{eqn:Radial invitro evolution fcn} can be written as:
\begin{equation}
    \delta^{-1}\frac{d\delta}{dt}=G_0 c_B\frac{I_1(\sqrt{\lambda}R)}{\sqrt{\lambda}R I_0(\sqrt{\lambda}R)}H_l(\sqrt{\lambda}R),
\end{equation}
where $H_l(r)\defeq r\left(\frac{I_1'(r)}{I_1(r)}-\frac{I_l'(r)}{I_l(r)}\right)$. It was checked that $H'_l(r)>0$ for any $r>0$ and $l\in\mathbb{N}+$ (see equation (2.19) in \cite{friedman2001symmetry}). On the other hand, by using the asymptote of $I_l(r)$ in \eqref{eqn:asymptote of I and K infty}, one can check that $\lim_{r\rightarrow\infty}H_l(r)=0$. Thus, $H_l(r)<0$ for any $r>0$ and wave number $l$, which further yields $\delta^{-1}\frac{d\delta}{d t}$ is negative as well. Therefore, the amplitude of the perturbation decays as time evolves for any wave number. By now, the proof of the argument for the \emph{in vitro} model in Corollary \ref{cor:Radial} is completed.

For the \emph{in vivo} model, $\delta^{-1}\frac{d\delta}{d t}$ is given by \eqref{eqn:Radial invivo growing rate of delta}. Now we check the properties of $F_4(\lambda,l,R)$ established in Corollary \ref{cor:Radial}. Observe that according to \eqref{eqn:Radial invivo growing rate of delta}, the evolution function can be decomposed as $F_4(\lambda,l,R)=G_0 c_B (T_1-T_2)$, where $T_1(\lambda,l,R)$ and $T_2(\lambda,l,R)$ are given by:
\begin{subequations}
\begin{alignat}{2}
T_1(\lambda,l,R)&=\frac{l}{\sqrt{\lambda}R C(R)}\left(\frac{C_1(R)}{C_l(R)}K_l(R)I_l(\sqrt{\lambda}R)-K_1(R)I_1(\sqrt{\lambda}R)\right),\\
T_2(\lambda,l,R)&=\frac{1}{C(R)}\left(\frac{C_1(R)}{C_l(R)}K_l(R)I_l'(\sqrt{\lambda}R)-K_1(R)I_1'(\sqrt{\lambda}R)\right).
\end{alignat}
\end{subequations}
By using the asymptotes in \eqref{eqn:asymptote of I and K 0} one can check that for any wave number $l\geq 2$:
\begin{subequations}
\begin{alignat}{2}
&T_1\sim\frac{1-l}{2},&\qquad\text{as}\quad R\sim 0,\\
&T_2\sim 0,&\qquad\text{as}\quad R\sim 0.
\end{alignat}
Therefore, $F_4(\lambda,l,R)\sim G_0 c_B\frac{1-l}{2}<0$ as $R$ approaches to zero (this can be observed in Figure \ref{fig:Radial invivo}). 

On the other hand, by using the asymptotes at infinity: \eqref{eqn:asymptote of I and K infty} and \eqref{eqn:asymptote of I' and K' infty}, we can also check that
\begin{subequations}
\begin{alignat}{2}
&T_1\sim\frac{l(1-10 l^2)}{32\lambda R^3(\sqrt{\lambda}+1)}=O(\frac{1}{R^3}),&\qquad\text{as}\quad R\sim+\infty,\\
&T_2\sim\frac{5(1-l^2)(\sqrt{\lambda}-1)}{16\lambda R^2(\sqrt{\lambda+1})},&\qquad\text{as}\quad R\sim +\infty,
\end{alignat}
\end{subequations}
which further yields
\begin{equation}
 F_4(\lambda, l, R)\sim \frac{5(l^2-1)(\sqrt{\lambda}-1)}{16\lambda R^2(\sqrt{\lambda+1})}+O(\frac{1}{R^3})\quad\text{as}\quad R\sim +\infty.
\end{equation}
\end{subequations}
Therefore, for $\lambda>1$ by intermediate value theorem the function $F_4(\sqrt{\lambda},l,R_0)$ must intersect the horizontal axis.

Now, we investigate the special case $\lambda=1$. Firstly, observe that by using the identity
\eqref{eqn:bessel identity}, $F_4(\lambda,l,R)$ can be further written into:
\begin{align}
    F_4=\frac{1}{C(R)}\left(K_1(R)I_2(\sqrt{\lambda}R)-\frac{C_1(R)}{C_l(R)}K_l(R)I_{l+1}(\sqrt{\lambda}R)-\frac{l-1}{\sqrt{\lambda}R}K_1(R)I_1(\sqrt{\lambda}R)\right).
\end{align}
Also note that when $\lambda=1$, by using the Wronskians cross product \eqref{eqn:Wronskians}, we get:
\begin{subequations}
\begin{alignat}{2}
C(R)&=I_0(R)K_{1}(R)+I_{1}(R)K_0(R)=\frac{1}{R},\\
C_l(R)&=-\frac{1}{2}\left(\left(K_{l+1}(R)+K_{l-1}(R)\right)I_l(R)+\left(I_{l+1}(R)+I_{l-1}(R)\right)K_l(R)\right)\\\nonumber
&=-\frac{1}{R}\quad\text{for}\quad\forall l\in\mathbb{N}.
\end{alignat}
\end{subequations}
Therefore, when $\lambda=1$ we can further simplify $F_4(\lambda,l,R)$ into:
\begin{align}
  \frac{1}{c_B G_0}F_4(\lambda,l,R)
  &=R\Big(K_1(R)I_2(R)-K_{l}(R)I_{l+1}(R)\Big)-(l-1)K_1(R)I_1(R)\\\nonumber
  &=\sum_{j=1}^{l-1}R\Big(K_{j}(R)I_{j+1}(R)-K_{j+1}(R)I_{j+2}(R)\Big)-K_1(R)I_1(R).
\end{align}
\subsection{Relationship between the Radial boundary and the traveling wave boundary}
\label{sec:Relationship between the Radial boundary and the traveling wave boundary}
In the last section we discuss the relationship between the two kinds of boundaries. In Section \ref{sec:Analytic solutions} we have already checked that without the perturbation, the propagation speed of the radial boundary converges to that of the traveling wave boundary as the radius tends to infinity. 

Now, we explore the relationship for the perturbed boundaries. As before, we use $(r,\theta)$ to present the polar coordinates, and $(\xi,y)$ for the Euler coordinates. Considering the perturbation of the radial boundary, let $\Tilde{y}=\theta*R(t)$ and $\Tilde{l}=l/R(t)$. Then the perturbation part can be rewritten as:
\begin{equation}
    \mathcal{P}_l(\theta)=\cos{l\theta}=\cos{\Tilde{l}\Tilde{y}}\defeq\mathcal{P}_{\omega}(\Tilde{y})
\end{equation}
with $\Tilde{y}\in (-\pi R,\pi R)$. Moreover, as $R\rightarrow +\infty$, $\Tilde{l}$ tends to zero and $\Tilde{y}\in\mathbb{R}$.

Also note that we can map the unperturbed radial boundary, $r=R$, to the unperturbed traveling wave boundary, $\xi=0$, by the map:
\begin{equation}
  (R,\theta)\mapsto(0, \tan{\theta/2}),\qquad \theta\in (-\pi,\pi).
\end{equation}
Thus, as $R\rightarrow+\infty$ any radial perturbation with finite wave number $l$ (defined by \eqref{eqn:Radial boundary perturbation}) will converge to the perturbation of the traveling wave boundary (defined by \eqref{eqn:TW boundary perturbation}) but with the zero frequency. Further more, for the same nutrient model the following relationships of the amplitude evolution equations hold:
\begin{subequations}
\begin{alignat}{2}
\lim_{R\rightarrow+\infty}F_3(\lambda,l,R)&=F_1(\lambda,0)=0,\\
\lim_{R\rightarrow+\infty}F_4(\lambda,l,R)&=F_2(\lambda,0)=0.
\end{alignat}
\end{subequations}
for any $\lambda>0$ and $l\in\mathbb{N}^+$.
\section{Conclusion}
\label{Conclusion}
In this paper, we study the tumor boundary instability induced by nutrient consumption and supply in two scenarios: 1) the front of the traveling wave; 2) the radially symmetric boundary. In each scenario, we investigate the boundary behaviors under two different nutrient supply regimes, \emph{in vitro} and \emph{in vivo}. 

For the traveling wave scenario, our analysis shows the boundary is stable for any frequency perturbation $l\in\mathbb{R}^+$ and positive consumption rate $\lambda$ when the nutrient supply is governed by the \emph{in vitro} regime. In contrast, for \emph{in vivo} regime, there exists a threshold value $L$ such that the perturbation with a frequency smaller than $L$ becomes unstable when the nutrient consumption rate $\lambda$ is larger than one. 

Then we consider the radially symmetric boundary scenario to explore further the influence of the finite size effect on boundary stability/instability. Our analysis shows that the \emph{in vitro} regime still suppresses the increase of perturbation amplitude and stabilizes the boundary regardless of the consumption rate $\lambda$, perturbation wave number $l\in\mathbb{N}$, and tumor size $R$. For the \emph{in vivo} regime, when $\lambda\leq 1$, the boundary behaves identically the same as the \emph{in vitro} case. However, when $\lambda>1$, the continuous growth of tumor radius enables perturbation wave number $l$ to become unstable in turn (from low to high). Further more, as $R$ is approaching infinity, the results in the radial case connect to the counterparts in the traveling wave case.

In the end, we conjecture that symmetric breaking traveling wave solutions may exist in the \emph{in vivo} nutrient regime. From Figure \ref{fig:F2}, one can observe that for proper large $l$ there exists $\lambda_0>1$ such that $F_2(\lambda_0,l)=0$, i.e., the perturbation amplitude $\delta$, up to some higher order error, neither growing nor decaying for such parameters. Thus, it is reasonable to expect that one may get the symmetric breaking traveling wave solutions by carefully modifying the linear solutions around the parameter $(\lambda_0,l)$. 
We speculate that the technique in \cite{friedman2001symmetry} might be helpful in solving this conjecture, which we save for future studies.

\section*{Acknowledgments}
The work of Y.F. is supported by the National Key R\&D Program of China, Project Number 2021YFA1001200. The work of M.T. is partially supported by Shanghai Pilot Innovation project, Project Number 21JC1403500, and NSFC grant number 11871340. The work of X.X. is supported by the National Key R\&D Program of China, Project Number 2021YFA1001200, and the NSFC Youth program, grant number 12101278. The work of Z.Z. is supported by the National Key R\&D Program
of China, Project Number 2021YFA1001200, 2020YFA0712000, and NSFC grant number 12031013, 12171013.
\newpage
\appendix
\section{Properties of Bessel functions}
\label{sec:Properties of Bessel functions}
Since the solutions of the radial case are presented in terms of the second kind modified Bessel functions $I_n(r)$ and $K_n(r)$ (for $n\in\mathbb{N}$), we review some basic properties of them in this section. Firstly, $I_n(r)$ and $K_n(r)$ solve the differential equation:
\begin{equation}
    r^2\frac{d^2f}{d r^2}+r\frac{d f}{d r}-(r^2+n^2)f=0,
\end{equation}
and are strict positive for any $n\in\mathbb{N}$ and $r>0$. For the derivatives, we have $I_0'(r)=I_1(r)>0$ and $K_0'(r)=-K_1(r)<0$, and for $n\geq 1$:
\begin{subequations}
\label{eqn:I' and K'}
\begin{alignat}{2}
I_n'(r)&=\frac{I_{n-1}(r)+I_{n+1}(r)}{2}>0,\\
K_n'(r)&=-\frac{K_{n-1}(r)+K_{n+1}(r)}{2}<0.
\end{alignat}
Therefore, $I_j(r)$ are monotone increasing functions, and $K_j(r)$ are monotone decreasing functions.
\end{subequations}
And the Bessel function $I_n(r)$ satisfies:
\begin{equation}
\label{eqn:bessel identity}
 I_n'(r)-\frac{n}{r}I_n(r)=I_{n+1}(r),   
\end{equation}
for any $n\in\mathbb{N}^+$.

When $r\rightarrow 0$, $I_n(r)$ and $K_n(r)$ possess following asymptotes: 
\begin{subequations}
\label{eqn:asymptote of I and K 0}
\begin{alignat}{2}
&I_n(r)\sim\frac{1}{\Gamma(n+1)}(\frac{r}{2})^n,\qquad\text{for}\quad n\in\mathbb{N},\\
&K_n(r)\sim\frac{\Gamma(n)}{2}(\frac{r}{2})^{-n},\qquad\text{for}\quad n\in\mathbb{N}^+,\\
&K_0(r)\sim-\ln{r}.
\end{alignat}
\end{subequations}
While as $r\rightarrow +\infty$, $I_n(r)$ and $K_n(r)$ have the asymptotes: 
\begin{subequations}
\label{eqn:asymptote of I and K infty}
\begin{alignat}{2}
&I_n(r)\sim(\frac{1}{2\pi r})^{1/2}e^{r}\left(1-\frac{4n^2-1}{8r}+\frac{(4n^2-1)(4n^2-9)}{128 n^2}+O(\frac{1}{r^3})\right),\\
&K_n(r)\sim(\frac{\pi}{2r})^{1/2}e^{-r}\left(1+\frac{4n^2-1}{8r}+\frac{(4n^2-1)(4n^2-9)}{128 r^2}+O(\frac{1}{r^3})\right).
\end{alignat}
\end{subequations}
By using \eqref{eqn:I' and K'}, we can also derive the asymptotes for $I'(r)$ and $K'(r)$ for $r\rightarrow+\infty$:
\begin{subequations}
\label{eqn:asymptote of I' and K' infty}
\begin{alignat}{2}
&I_n'(r)\sim(\frac{1}{2\pi r})^{1/2}e^{r}\left(1-\frac{4n^2+3}{8r}+\frac{(4n^2-1)(4n^2+15)}{128 r^2}+O(\frac{1}{r^3})\right),\\
&K_n'(r)\sim-(\frac{1}{2\pi r})^{1/2}e^{-r}\left(1+\frac{4n^2+3}{8r}+\frac{(4n^2-1)(4n^2+15)}{128 r^2}+O(\frac{1}{r^3})\right).
\end{alignat}
\end{subequations}
Further more, $I_n(r)$ and $K_n(r)$ satisfy the so-called Wronskians cross product:
\begin{equation}
\label{eqn:Wronskians}
    I_n(r)K_{n+1}(r)+I_{n+1}(r)K_n(r)=\frac{1}{r}\quad\text{for}\quad\forall n\in\mathbb{N}.
\end{equation}

\newpage
\bibliographystyle{plain}
\bibliography{refs,preprints}

\end{document}